\newtheorem{theorem}{Theorem}[section]
\newtheorem{lemma}[theorem]{Lemma}
\newtheorem{corollary}[theorem]{Corollary}
\newtheorem{proposition}[theorem]{Proposition}
\newtheorem{example}[theorem]{Example}
\def\r#1{\uppercase\expandafter{\romannumeral#1}}
\numberwithin{equation}{section}
\def\a{\alpha}
\def\b{\beta}
\def\Qp{\mathbb{Q}_{p}}
\def\Zp{\mathbb{Z}_{p}}
\def\Z{\mathbb{Z}}
\def\P{\mathbb{P}^{1}(\mathbb{Q}_p)}
\def\PF{\mathbb{P}^{1}(\mathbb{F}_p)}
\def\Q{\mathbb{Q}}
\def\a{\alpha}
\def\b{\beta}
\begin{document}
\title{Minimality of $p$-adic rational maps with good reduction}

\author{Aihua Fan}
\address{School of Mathematics and Statistics, Central China Normal University, 430079, Wuhan, China \& LAMFA, UMR 7352, CNRS,
Universit\'e de Picardie Jules Verne, 33, Rue Saint Leu, 80039
Amiens Cedex 1, France }
\email{ai-hua.fan@u-picardie.fr}

\author{Shilei Fan}
\address{School of Mathematics and Statistics, Central China Normal University, 430079, Wuhan, China}
\email{slfan@mail.ccnu.edu.cn}

\author{Lingmin Liao}
\address{LAMA, UMR 8050, CNRS,
Universit\'e Paris-Est Cr\'eteil, 61 Avenue du
G\'en\'eral de Gaulle, 94010 Cr\'eteil Cedex, France}
\email{lingmin.liao@u-pec.fr}

\author{Yuefei Wang}
\address{Institute of  Mathematics, AMSS,
Chinese Academy of Sciences, 100190 Beijing, China.}
\email{wangyf@math.ac.cn}

\thanks{A. H. FAN was supported by NSF of China (Grant No. 11471132); S. L. FAN was supported by NSF of China (Grant No.s 11401236 and 11611530542); L. M. LIAO was  supported by 12R03191A-MUTADIS; Y. F. WANG was supported by NSF of China (Grant No. 11231009).}

\begin{abstract}
A rational map with good reduction in the field $\mathbb{Q}_p$ of $p$-adic numbers defines a $1$-Lipschitz dynamical system on the projective line $\mathbb{P}^1(\mathbb{Q}_p)$ over  $\mathbb{Q}_p$. The dynamical structure of such a system is completely described by a minimal decomposition. That is to say, $\mathbb{P}^1(\mathbb{Q}_p)$ is decomposed into three parts:  finitely many periodic orbits; finite or countably many minimal subsystems each consisting of a finite union of balls; and the attracting basins of  periodic orbits and minimal subsystems.
For any prime $p$, a criterion of minimality for  rational maps with good reduction is obtained. When $p=2$, a condition in terms of the coefficients of the rational map is proved to be necessary for the map being minimal and having good reduction, and sufficient for the map being minimal and $1$-Lipschitz.
  It is also proved that a rational map having good reduction of degree $2$, $3$ and $4$ can never be minimal on the whole space $\mathbb{P}^1(\mathbb{Q}_2)$.


\end{abstract}
\subjclass[2010]{Primary 37P05; Secondary 11S82, 37B05}
\keywords{$p$-adic
dynamical system, minimal decomposition, projective line, good reduction, rational map }
\maketitle
\section{Introduction}\label{int}

The present study contributes to the theory of $p$-adic dynamical systems which has recently been intensively and widely developed. For this development, one can consult the books \cite{Anashin-Khrennikov-AAD,Baker-Rumely-book,SilvermanGTM241} and their bibliographies therein.  

For a prime number $p$,  let $\mathbb{Q}_p$ be the field of $p$-adic numbers and  $\Zp$ be the ring of integers in $\Qp$.
Ergodic theory on the ring $\mathbb{Z}_p$ 
  is extremely important for applications to automata theory, computer science and cryptology, especially in connection with pseudorandom numbers and uniform distribution of sequences. The minimality, or equivalently the ergodicity
 with respect to the Haar measure, of non-expanding
dynamical systems on $\Zp$ are extensively studied in \cite{ Ana06,CFF09,CP11Ergodic,DP09,FLYZ07,FL11, FLSq, FLCh, GKL01, Jeong2013,Larin02, OZ75}, and so on.

The dynamical properties of the fixed points of the rational maps have been studied in the space $\mathbb{C}_p$ of $p$-adic complex numbers \cite{ARS13,KM06,MR04,Satt2015} and in the adelic space \cite{DKM07}. The Fatou and Julia theory of the rational maps on $\mathbb{C}_p$, and  on the Berkovich space over $\mathbb{C}_p$, are also developed 
\cite{Benedetto-Hyperbolic-maps,Baker-Rumely-book,Hsia-periodic-points-closure,Rivera-Letelier-Dynamique-rationnelles-corps-locaux,SilvermanGTM241}.  
However,  the global dynamical structure of rational maps on $\Q_p$ remains unclear, though the rational maps of degree one are totally characterized in \cite{FFLW2014} . 

Let $\phi\in \Qp(z)$ be a rational map of degree  $d\geq 2$. Then $\phi$ induces a dynamical system on the projective line $\P$ over $\Q_p$, denoted  by $(\P, \phi)$. Let $E\subset \P$ be a subset such that $\phi(E)\subset E$. Then restricted to $E$, $\phi$ defines a subsystem $(E, \phi |_E)$. The subsystem $(E, \phi |_E)$ is called {\it minimal} if for any point $x\in E$, the orbit of $x$ under $\phi$ is dense in $E$.
 In this article, we suppose that $\phi$ has good reduction (see the definition below).  
 The minimality of $(\P, \phi)$ and its subsystems will be fully investigated.  As we will see, any rational map  $\phi$ having good reduction is $1$-Lipschitz continuous on $\P$ which is equipped with its spherical metric. This suggests that  $(\P, \phi)$
shares many properties with the polynomial dynamics on $\Zp$,
which are $1$-Lipschtz with respect to the metric induced by the
$p$-adic absolute value.

Let $\overline{~\centerdot~}$ denote the \emph{reduction modulo }$p$  from $\Zp$ to $\Zp/p\Zp$ such that $a\mapsto \overline{a}$ with $a\equiv\overline{a} \ ({\rm mod} \ p)$.
For a polynomial $f(z)=\sum_{i=0}^{d}a_i z^i \in\Zp[z]$, the {\it reduction of $f$  modulo $p$} is defined as $$\overline{f}(z)=\sum_{i=0}^{d}\overline{a}_i z^i.$$

Note that $\phi$ can be written as a quotient of polynomials $f,g\in\Zp[z]$ having no common factors, such that at least one coefficient of $f$ or $g$ has absolute value $1$. Let $\mathbb{F}_{p}=\Z_p/p\Z_p$ be the finite field of $p$ elements. The {\it degree} of a rational map $\phi$, denoted by $\deg {\phi}$, is the maximum of the degrees of its denominator and numerator without common factors.
The {\it reduction of $\phi$  modulo $p$} is the rational function (of degree at most $\deg {\phi}$)
$$\overline{\phi}(z)=\frac{\overline{f}(z)}{\overline{g}(z)}\in  \mathbb{F}_{p}(z),$$  obtained by canceling common factors in the reductions $\overline{f}(z),\overline{g}(z)$. If $\deg\overline{\phi}=\deg \phi$, we say $\phi$ has \emph{good reduction}. If $\deg\overline{\phi}<\deg \phi$, we say $\phi$ has \emph{bad reduction}.

Assume that  $\phi$ has good reduction.  Let $\rho(\cdot,\cdot)$ be the\emph{ spherical metric} on $\P$ (see the definition in Section \ref{prel}). Then the map $\phi$ is 1-Lipschitz continuous (everywhere non-expanding \cite[p.59]{SilvermanGTM241}): $$\rho(\phi(P_1),\phi(P_2))\leq \rho(P_1,P_2) ~~~~~~~\quad\mbox{for all}~P_1,P_2\in \P.$$
 The 1-Lipschitz continuity of   $\phi$  leads us to investigate the minimality and minimal decomposition of the dynamical system $(\P,\phi)$, as in the case of polynomial dynamical systems on $\Zp$ studied in \cite{FL11}.

Let $\mathbb{P}^{1}(\mathbb{F}_p)$ be the projective line over $\mathbb{F}_p$. The reduction $\overline{\phi}$ induces a transformation from $\mathbb{P}^{1}(\mathbb{F}_p)$ into itself. We denote by  $\phi^{ k}$  the $k$-th iteration of $\phi$.
 A criterion of the minimality of the system $(\P,\phi)$ is given in the following theorem.
\begin{theorem}\label{minimal}
Let $\phi \in \Q_p(z)$ be a rational map of $\deg \phi \ge 2$ with good reduction.
Then the dynamical system $(\P,\phi)$ is minimal if and only if  the following conditions are satisfied:
\begin{itemize}
  \item[\rm{(1)}] The reduction $\overline{\phi}$ is transitive on  $\mathbb{P}^{1}(\mathbb{F}_p)$.
  \item[\rm{(2)}]  $(\phi^{(p+1)})^{\prime}(0)\equiv 1 \ ({\rm mod} \  \ p)$ and  $|\phi^{(p+1)}(0)|_p= 1/p$.
  \item[\rm{(3)}] For the case $p=2 \mbox{ or } 3$, additionally, $|\phi^{(p+1)p}(0)|_p= 1/p^2$.
\end{itemize}
\end{theorem}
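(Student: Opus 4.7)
The plan is to reduce the question to the minimality of a single analytic $1$-Lipschitz self-map of a residue ball containing $0$, and then to exploit a power-series expansion at $0$ combined with known criteria for minimality of $1$-Lipschitz systems on $\Zp$.

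Since the reduction map $\P \to \PF$ is a continuous factor from $(\P,\phi)$ onto $(\PF, \overline{\phi})$, minimality of $\phi$ forces transitivity of $\overline{\phi}$ on the finite set $\PF$, which gives the necessity of (1). Conversely, when (1) holds, $\overline{\phi}$ is a $(p+1)$-cycle on $\PF$ and so $\phi$ cyclically permutes the $p+1$ residue balls of $\P$. A standard return-map argument then shows that $(\P,\phi)$ is minimal if and only if the first-return map $\psi := \phi^{p+1}$ is minimal on any single residue ball. Since $0\in p\Zp$ and $p\Zp$ is one of these balls, I would work with $\psi$ restricted to $p\Zp$, which it fixes setwise.

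Using the analyticity of $\psi$ on $p\Zp$, write $\psi(z) = a + b z + c z^2 + \cdots$ with $a = \psi(0)$ and $b = \psi'(0)$. The induced transformation on $p\Zp/p^2\Zp \simeq \Fp$ is the affine map $w\mapsto a/p + b w$, which is a $p$-cycle on $\Fp$ if and only if $b \equiv 1 \pmod p$ and $|a|_p = 1/p$; this is precisely condition (2). Granted (2), the iterate $\psi^p$ fixes each of the $p$ sub-balls of radius $1/p^2$, and the analysis must be repeated for $\psi^p$ on the sub-ball $p^2\Zp$ containing $0$. A direct Taylor expansion gives
\begin{equation*}
\psi^p(0) \;=\; a\cdot\frac{b^p-1}{b-1} \;+\; c\Bigl(\textstyle\sum_{j=0}^{p-1} b^{p-1-j} A_j^2\Bigr) a^2 \;+\; O(a^3), \qquad A_j := \tfrac{b^j-1}{b-1}.
\end{equation*}
Since $b \equiv 1 \pmod p$, one checks $(b^p-1)/(b-1) \equiv p \pmod{p^2}$ for odd $p$, while the second-order coefficient reduces modulo $p$ to a multiple of $\sum_{j=0}^{p-1} j^2 = (p-1)p(2p-1)/6$, which is divisible by $p$ exactly when $p\ge 5$ (the denominator $6$ being coprime to $p$). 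Hence for $p \ge 5$ the condition $|\psi^p(0)|_p = 1/p^2$ is automatic, whereas for $p \in \{2,3\}$ it must be imposed separately, yielding condition (3); meanwhile the chain rule together with $b \equiv 1 \pmod p$ forces $(\psi^p)'(0) \equiv 1 \pmod p$ for free.

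To conclude, an induction on the level is required: once the first two levels behave correctly, a Hensel-type propagation argument (in the spirit of the criterion for polynomial dynamics on $\Zp$ in \cite{FL11}) should show that $\psi^p$ on $p^2\Zp$ again satisfies conditions of type (2), and this continues indefinitely, so the finitely many hypotheses (1)--(3) suffice for minimality. The main obstacle I anticipate is precisely this propagation step: one must uniformly control the quadratic and higher error terms in iterating $\psi$ across levels, and in particular verify that the arithmetic obstruction forcing condition (3) for $p\in\{2,3\}$ at the second level does not recur at any deeper level once (3) has been imposed.
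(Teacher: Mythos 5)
Your reduction to the return map $\psi=\phi^{p+1}$ on $p\Zp$ and your level-by-level Taylor analysis follow essentially the same route as the paper, and your explicit computation of $\psi^{p}(0)=aA_p+cC_pa^2+O(a^3)$ with $C_p\equiv\sum_{j=0}^{p-1}j^2\pmod p$ is a correct and rather transparent way to see why the extra hypothesis (3) is needed exactly for $p\in\{2,3\}$: at the transition from level $2$ to level $3$ the quadratic term has the same size $p^{-2}$ as the linear term, and its leading coefficient is a unit precisely when $p\mid 6$. However, the proof as written has a genuine gap, and you name it yourself: the propagation step. Conditions (1)--(3) only control the induced maps on $\mathfrak{B}_1,\mathfrak{B}_2,\mathfrak{B}_3$, and minimality of $(\P,\phi)$ is equivalent to minimality of \emph{every} finite quotient $(\mathfrak{B}_n,\phi_n)$; so the whole content of the sufficiency direction is that a cycle which ``grows'' at level $2$ (for $p\ge 5$), or at levels $2$ and $3$ (for $p=2,3$), grows at all deeper levels. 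The paper does not reprove this: it invokes Proposition \ref{minimalphi} (i.e.\ \cite[Propositions 1--2, Corollary 1]{FL11} and \cite[Propositions 4.7--4.11]{FLpre}) together with Corollary \ref{mininduce}. You neither prove this nor cite it, so your argument stops exactly where the theorem is decided.

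It is worth noting that your own expansion almost closes the gap for odd $p$: at level $n\ge 3$ one has $|a^{(n-1)}|_p\le p^{-2}$, so the quadratic term is of size at most $p^{-4}$ and can never interfere with the linear term $a^{(n-1)}\bigl((b^{(n-1)})^p-1\bigr)/(b^{(n-1)}-1)$, whose valuation increases by exactly one since $\bigl((1+pt)^p-1\bigr)/(pt)\equiv p\pmod{p^2}$ for odd $p$. But for $p=2$ the factor $1+b^{(n-1)}$ lies in $2\Z_2$ with the right valuation only if $(\psi^{2^{n-2}})'(0)\equiv 1\pmod 4$, and controlling this derivative modulo $4$ across levels is precisely the recursion $(\a_n,\b_n)\mapsto(\a_{n+1},\b_{n+1})$ of \cite[Lemma 2, Corollary 1]{FL11}; this is not a routine error estimate and must be supplied. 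A secondary, smaller point: your ``standard return-map argument'' deducing minimality on $\P$ from minimality of $\psi$ on a single residue ball tacitly uses that $\phi$ maps each residue ball \emph{onto} the next; the paper justifies this by first observing that minimality of $(p\Zp,\psi)$ forces $\psi$ (hence $\phi$) to be an isometry, and you should record that step.
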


Under the spherical
metric  $\rho(\cdot,\cdot)$,    $\P$    can be considered as an infinite symmetric tree (see Figures \ref{2adic} and \ref{3adic}). This tree is in fact an infinite $(p+1)$-Cayley tree. The centered vertex which is the root of the tree, is called {\em Gauss point}. Other vertices correspond to balls with radius strictly less than one (with respect to the spherical metric). The points in $\P$ are the boundary points of the tree.
A vertex is said to be \emph{at level $n$} ($n\geq 1$), if there are $n$ edges between the vertex and the Gauss point. Then for $n\geq 1$, there are $(p+1)p^{n-1}$ vertices at level $n$.  Since a rational map with good reduction is  $1$-Lipschitz continuous with respect to the spherical metric, it will induce an action on the tree under which the Gauss point is fixed and the sets of vertices at the same level are invariant.

\begin{figure}
  \centering
  \includegraphics[width=0.5\textwidth]{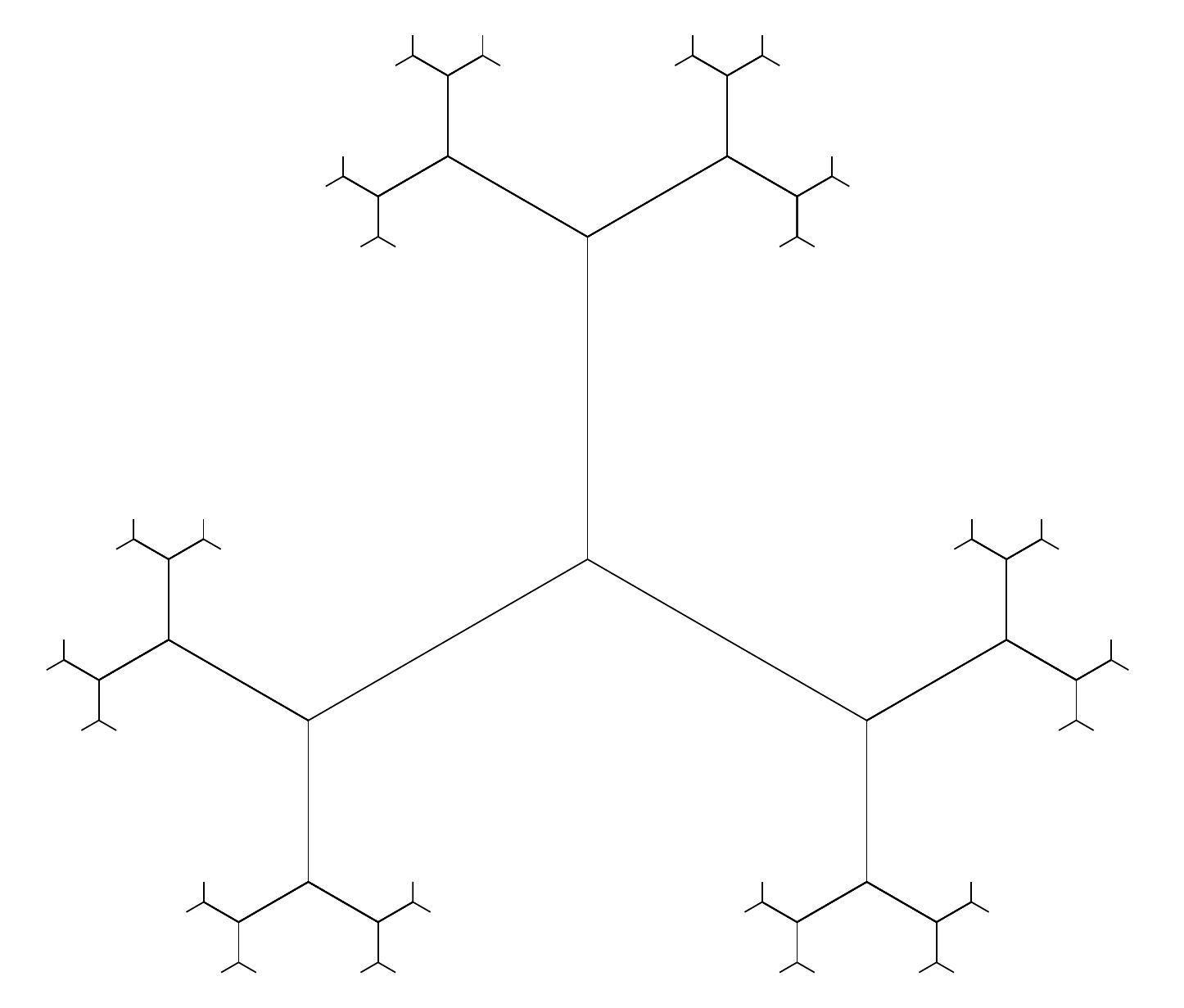}\\
  \caption{Tree structure of $\mathbb{P}^1(\Q_2)$. The points of $\mathbb{P}^1(\Q_2)$ are considered as the boundary  points of the infinite tree.}\label{2adic}
\end{figure}

Remark that if we consider $\phi$  as action on the tree,  the condition (1) in  Theorem \ref{minimal} means that  $\phi$ is transitive on the set of vertices at level 1, the conditions (1) and (2) imply  that  $\phi$ is transitive on the set of vertices at level $2$, while the condition (3) implies that $\phi$ is transitive on the set of vertices at level $3$ if conditions (1) and (2) are  also satisfied. We will see in Theorem \ref{minifinite} that the transitivity of $\phi$ on these finite levels is sufficient for the minimality of $\phi$ on the whole space. The conclusion of  Theorem \ref{minimal} can be  interpreted  as follows.

\emph{The dynamical system $(\P,\phi)$ is minimal if and only $\phi$ is transitive on the set of vertices at level $2$ for $p\geq 5$,  and at level $3$  for $p=2$ or $ 3$.}

\medskip
When the system $(\P, \phi)$ is not minimal, we describe the dynamical structure of the system by showing all its minimal subsystems.
\begin{theorem}\label{minmaldecomp}
Let $\phi \in \Q_p(z)$ be a rational map of $\deg \phi \ge 2$ with good reduction. We have the following decomposition
$$
     \P = \mathcal{P} \bigsqcup \mathcal{M} \bigsqcup \mathcal{B}
$$
where $ \mathcal{P} $ is the finite set consisting of all periodic points of
$\phi$, $ \mathcal{M} = \bigsqcup_i \mathcal{M} _i$ is the union of all (at most countably
many) clopen invariant sets such that each $\mathcal{M}_i$ is a finite union
of balls and each subsystem $\phi: \mathcal{M}_i \to \mathcal{M}_i$ is minimal, and 
points in $\mathcal{B}$ lie in the attracting basin of a periodic orbit or of
a minimal subsystem.  Moreover, the length of a periodic orbit has one of the following forms:
$$k \text{ ~~or ~~}k\ell, \quad \text{ if } p\geq 5,$$
$$k \text{ ~~or~~ }k\ell \text{~~ or~~ } kp, \quad \text{ if } p=2\text{ or }3,$$
where $1\leq k \leq p+1$  and $\ell\mid (p-1)$.
\end{theorem}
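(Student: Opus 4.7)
The overall strategy is to iterate the minimality criterion of Theorem \ref{minimal} along the tree structure of $\P$. Since $\phi$ has good reduction and is thus $1$-Lipschitz, it fixes the Gauss point and at each level $n$ induces a map on the finite collection of level-$n$ balls. These balls split into periodic cycles under $\phi$ together with their strict preimages, which provides a natural stratification of $\P$. For each periodic cycle $B_0 \to B_1 \to \cdots \to B_{k-1} \to B_0$ at level $n$, I consider the first-return map $\phi^k|_{B_0}$; after an affine change of coordinate identifying $B_0$ with a model of $\P$, this restriction becomes a rational map with good reduction on the model. Apply Theorem \ref{minimal} to this induced system: either it is minimal on $B_0$, in which case $\bigsqcup_i B_i$ is one of the clopen minimal components $\mathcal{M}_i$; or it fails to be minimal and the cycle decomposes into strictly smaller periodic cycles together with basin points at a deeper level.

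Iterating this dichotomy level by level produces the claimed decomposition. Periodic orbits in $\mathcal{P}$ arise when the induced first-return map on a nested ball cycle has a fixed point at every depth, forcing the ball sequence to collapse to a single point; the number of such orbits is finite because each is eventually isolated by one of the finitely many levels at which the local normal form stabilizes. The minimal components $\{\mathcal{M}_i\}$ are at most countable since each is determined by its distinguished root vertex in the tree. Any point not in $\mathcal{P} \sqcup \bigsqcup_i \mathcal{M}_i$ is by construction attracted either to a periodic orbit or into one of the $\mathcal{M}_i$, so it lies in $\mathcal{B}$. This yields the three-part decomposition.

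For the orbit-length classification, let $x$ be periodic of minimal period $n$. The level-$1$ ball containing $x$ belongs to a $\overline{\phi}$-cycle in $\PF$ of length $k$ with $1 \le k \le p+1$, and $k \mid n$. Then $\phi^k$ fixes the level-$1$ ball of $x$ and induces a permutation on the $p$ sub-balls of level $2$: by condition (2) of Theorem \ref{minimal} this sub-action is governed by the residue of the multiplier $(\phi^k)'(x)$ modulo $p$, whose order in $\mathbb{F}_p^\times$ is some divisor $\ell$ of $p-1$. This yields $n = k$ (trivial sub-action) or $n = k\ell$. For $p = 2$ or $3$, condition (3) of Theorem \ref{minimal} permits an additional wild contribution at level $3$, producing the extra possibility $n = kp$ and exhausting the cases listed in the statement.

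The main obstacle is the inductive step: one must verify that for each periodic cycle of balls $B_0 \to \cdots \to B_{k-1}$, the first-return map $\phi^k|_{B_0}$, after the appropriate coordinate change, is again a rational map of good reduction on the model projective line, so that Theorem \ref{minimal} can be reapplied. This requires some care, as restricting a good-reduction rational map to a non-invariant ball and then iterating is a subtler operation than iterating $\phi$ on the whole of $\P$; a secondary delicate point is ruling out a pathological infinite descent in which no minimal component is ever detected yet no orbit is attracted to a periodic point.
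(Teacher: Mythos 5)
Your overall shape (stratify $\P$ by the cycles of $\overline{\phi}$ at level $1$, pass to the first-return map on a cycle of balls, and recurse) matches the paper's, but the key technical step is misidentified and is exactly the point you flag as an ``obstacle'' without resolving. You propose to identify an invariant ball $B_0$ with ``a model of $\P$'' by an affine change of coordinates and then reapply Theorem \ref{minimal}. This cannot work: a ball of radius $p^{-n}$ is isometric to a disk such as $p\Zp$ (which splits into $p$ sub-balls), not to $\P$ (which splits into $p+1$ balls at level~$1$), so no change of coordinates turns the first-return map $\phi^{k}|_{B_0}$ into a good-reduction rational self-map of the projective line, and Theorem \ref{minimal} --- a statement about global minimality on $\P$ --- is not the tool that applies. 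What the paper does instead is show (Lemma \ref{lem-1-Lip} and the surrounding discussion in Section \ref{induc}) that $\phi^{k}$ restricted to the invariant ball $p\Zp$ is given by a convergent power series $\sum \lambda_i z^i$ with $\lambda_i\in\Zp$ and $\lambda_0\in p\Zp$, and then invokes the already-established minimal decomposition theorem for such power series on $p\Zp$ (Theorem \ref{Zpdeco}, from \cite{FLpre}), noting that $\phi^{n}\neq \mathrm{id}$ because $\deg\phi^{n}=(\deg\phi)^{n}$. That imported theorem is also precisely what rules out your second worry, the ``pathological infinite descent'': it guarantees that the recursion on cycles terminates in periodic points, genuine minimal components, or basin points, with only finitely many periodic points and at most countably many components. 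As written, your argument leaves both of these load-bearing steps open.

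For the orbit-length classification your sketch is in the right spirit ($k\le p+1$ from the level-$1$ cycle, $\ell\mid p-1$ from the order of the multiplier, and an extra factor $p$ only when $p=2,3$), but again the correct reference is not conditions (2)--(3) of Theorem \ref{minimal} (which concern minimality of the whole system) but the grows/splits/grows-tails/partially-splits classification of cycle lifts and Corollary \ref{periodiclength}, which packages the fact that for $p\ge 5$ a growing cycle grows forever and hence never closes up into a finite periodic orbit at a deeper level.
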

 The  decomposition in Theorem \ref{minmaldecomp} will be referred to as the {\em minimal
decomposition} of the system $\phi : \P \to \P$.  

We remark that the possible lengths of periodic  orbits were  investigated in 
\cite{Zievephd}, see also \cite[pp.62-64]{SilvermanGTM241}.

A finite periodic orbit of $\phi$ is by definition a minimal set.
But for the convenience of the presentation of the  paper, only the sets $\mathcal{M}_i$
in the above decomposition are called {\em minimal components}.
What kind of set can be a minimal component of a good reduction rational map?
In a recent work \cite{CFF09}, the authors showed that for any $1$-Lipschitz continuous map, a
minimal component  must be a Legendre set and that
any Legendre set is a minimal component of  some $1$-Lipschitz
continuous map. Recall that $E\subset \P$ is a {\it Legendre set} (see \cite[p.\,778]{CFF09}) if for every integer $n\geq 0$, every non-empty intersection of $E$ with a ball of radius $p^{-n}$ contains the same number of balls of radius $p^{-(n+1)}$.

We will show that for a  rational map with  good reduction, the minimal
components $\mathcal{M}_i$ are Legendre sets of special forms. Further, the dynamics of the minimal subsystems are adding machines. Let
$(p_s)_{s\ge 1}$ be a sequence of positive integers such that
$p_s|p_{s+1}$ for every $s\ge 1$. We denote by $\mathbb{Z}_{(p_s)}$
the inverse limit of $\mathbb{Z}/(p_s \mathbb{Z})$, called
{\em an odometer}. The sequence  $(p_s)_{s\ge 1}$ is called the {\em structure sequence}  of the   odometer. The map $z \to z+1$ on $\mathbb{Z}_{(p_s)}$ is the {\em adding machine} on
$\mathbb{Z}_{(p_s)}$. The structures of minimal components are determined as follows.

\begin{theorem}\label{structure-minimal}
Let $\phi \in \Q_p(z)$ be a rational map of $\deg \phi \ge 2$ with good reduction.  Assume that $E\subset \P$ is a minimal component  of $\phi$.
Then $\phi : E \to E$ is conjugate to the adding machine on an odometer
$\mathbb{Z}_{(p_s)}$, where  $$(p_s) = (k, k\ell, k \ell p, k\ell p^2,
\cdots)$$ with integers $k$ and $\ell$ satisfying that $1 \leq k\leq p+1$ and $\ell\mid
(p-1)$.
\end{theorem}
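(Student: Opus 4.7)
The plan is to exploit the ultrametric tree structure of $\P$ together with the $1$-Lipschitz property of $\phi$ (guaranteed by good reduction) in order to realize $\phi|_E$ as an inverse limit of cyclic permutations of balls. Since $E$ is clopen and $\phi$-invariant, the $1$-Lipschitz map $\phi$ sends every level-$s$ ball contained in $E$ into some level-$s$ ball contained in $E$. Minimality forces $\phi(E)=E$, so the induced map on the finite set of level-$s$ balls in $E$ is a bijection, and minimality again rules out more than one cycle (otherwise an orbit would miss a whole cycle's worth of balls). Hence this permutation is a single cycle of length $p_s$, the number of level-$s$ balls in $E$, and $p_s \mid p_{s+1}$. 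Choosing coherent cyclic labels $\{0,1,\dots,p_s-1\}$ at every level identifies $E$ topologically with the odometer $\Z_{(p_s)}$ and conjugates $\phi|_E$ to the adding machine $z\mapsto z+1$.

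It remains to show $(p_s) = (k,\,k\ell,\,k\ell p,\,k\ell p^2,\dots)$ with $k \le p+1$ and $\ell \mid (p-1)$. The bound $p_1 = k \le p+1$ is immediate since $\PF$ has $p+1$ points. For $p_2 = k\ell$, I would fix a periodic level-$1$ ball $B$ in the $k$-cycle and apply a M\"obius change of coordinates with good reduction that sends $B$ to $p\Zp$. The restriction $\psi := \phi^k|_B$ is $1$-Lipschitz, fixes $B$, and its induced action on the $p$ level-$2$ sub-balls of $B$ (identified with $\Fp$ after rescaling) is governed by the multiplier $\lambda := \overline{\psi'(a)} \in \Fp^{\times}$ for any $a \in B$. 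A single cycle of length $\ell$ on $\ell$ residue classes forces $\lambda$ to have order $\ell$ in $\Fp^{\times}$, i.e.\ $\ell \mid (p-1)$.

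For $s \ge 3$, I would iterate once more: take a periodic level-$2$ ball $B' \subset E$ and study $\Psi := \phi^{k\ell}|_{B'}$. Its multiplier equals $\lambda^\ell = 1$ in $\Fp^{\times}$, so after conjugation $B' \cong p^2\Zp$ the map has the form $\Psi(z) = z + p\,h(z)$ with $h$ a $1$-Lipschitz map whose reduction is forced by minimality to permute residue classes nontrivially. Induction on $s$ then shows that the permutation of the level-$s$ sub-balls of $B'$ lying in $E$ is always a single $p$-cycle, giving $p_s = k\ell\,p^{s-2}$. The main obstacle is this last step: transferring the neutral-multiplier analysis that underlies conditions (2) and (3) of Theorem~\ref{minimal} from the global setting to the local chart at $B'$, and in particular controlling the cases $p=2,3$, where the first few terms of the $p$-adic expansion of $\Psi$ must be scrutinized to ensure the cycle length really multiplies by $p$ at each new level rather than stabilizing. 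Steps 1 and 2, by contrast, are essentially formal consequences of $1$-Lipschitz minimality on a compact ultrametric space.
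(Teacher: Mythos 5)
Your first step --- the induced map on the level-$s$ balls of $E$ is a bijection forming a single cycle of length $p_s$, and coherent cyclic labelings conjugate $\phi|_E$ to the adding machine on $\mathbb{Z}_{(p_s)}$ --- is correct and is the same reduction the paper uses, as are the bound $k=p_1\le p+1$ and the passage to $\psi=\phi^{k}$ on a fixed level-$1$ ball carried to $p\Zp$ by a coordinate change in $\mathrm{PGL}_2(\Zp)$. At that point, however, the paper does no further multiplier analysis: it writes $\psi$ as a convergent power series on $p\Zp$ via Lemma \ref{lem-1-Lip} and simply invokes Theorem \ref{structure1} (i.e.\ \cite[Thm.~1.1]{FLpre}), which already classifies the minimal components of such power series as odometers with structure sequence $(\ell,\ell p,\ell p^2,\dots)$, $\ell\mid(p-1)$. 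You are in effect trying to reprove that cited classification, and the attempt has two genuine problems.

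First, the level-$2$ claim is incorrect as stated. The induced map on $p\Zp/p^2\Zp$ is the affine map $z\mapsto\psi(0)+\psi'(0)z$, so its cycle structure depends on the translation part as well as on the multiplier $\lambda$. When $\lambda\equiv 1\ ({\rm mod}\ p)$, the cycle through $E$ at level $2$ has length $p$ (if $\psi(0)\not\equiv 0\ ({\rm mod}\ p^2)$) or length $1$, not $\mathrm{ord}(\lambda)=1$; taken literally, your argument would then yield $p\mid(p-1)$. Moreover, when the cycle splits, the factor $\ell$ need not appear at level $2$ at all but only at some deeper level. The correct bookkeeping, which is what the grow/split/partially-split/grow-tails classification of Section \ref{induc} provides, is that each ratio $p_{s+1}/p_s$ lies in $\{1,\,p,\,\mathrm{ord}(\a_s)\}$, and that a ratio which is a nontrivial divisor of $p-1$ can occur at most once, because after a partial split the multiplier of the lifted cycle becomes $\a^{\ell}\equiv 1$. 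Second, you explicitly leave the step $s\ge 3$ (growth by exactly $p$ at every subsequent level, including $p=2,3$) as ``the main obstacle''; that is precisely the content of Proposition \ref{minimalphi} and Theorem \ref{structure1}, so the proposal does not close the argument. (In fact, for the conjugacy class of the odometer one does not need growth at every level: a minimal component is a nonempty finite union of balls, hence infinite, so $p_s\to\infty$, and combined with the ratio bookkeeping above this already forces the supernatural number $\lim p_s$ to be $k\ell p^{\infty}$ with $\ell\mid(p-1)$. But some such argument, or an appeal to the cited theorem, must be supplied, and your proposal does neither.)
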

In applications, one usually would like to construct or determine a minimal rational (polynomial) map by its coefficients. However, this is far from easy, although a criterion of the minimality of rational maps with good reduction is given in Theorem \ref{minimal}. For rational maps of degree one, a complete description is given  in \cite{FFLW2014}.
 Here we give the description of  rational maps of degree at least two with good reduction, but only  for $p = 2$. 
 It seems to be a much more harder task for $p\geq 3$. 

For a rational map $\phi\in \Qp(z)$ of degree at least $2$, the number of periodic points of a fixed period must be finite. Hence, there exists a $z_0\in \Qp$ such that $z_0, \phi(z_0), \phi^{ 2}(z_0)$ are distinct.  Consider the linear fraction
 $$
 g(z)=\frac{(z-z_0)(\phi^{ 2}(z_0)-\phi(z_0))}{(z-\phi(z_0))(\phi^{ 2}(z_0)-z_0)}.
 $$
  Then $g(z_0)=0, g(\phi(z_0))=\infty$ and $g(\phi^{ 2}(z_0))=1$.  Let $\psi=g\circ \phi \circ g^{-1}$ be the conjugation of $\phi$ by $g$. Then we have  $\psi(0)=\infty$ and  $\psi(\infty)=1$ and the rational function  $\psi$ can be written as 
$$ \psi(z)=\frac{a_0+a_1z+\cdots +a_{d-1}z^{d-1}+z^d}{b_1z+\cdots + b_{d-1}z^{d-1}+ z^d}$$
where  $a_i,b_j \in \Q_p(0\leq i<d, 1\leq j<d)$ and $d\geq 2$ is the degree of $\phi$.
Therefore, without loss of generality, we can always assume that $\phi(0)=\infty$ and $\phi(\infty)=1$.

The following  theorem provides, in some sense,  a 
principle for constructing minimal rational maps on $\mathbb{P}^{1}(\Q_2)$.
For a  degree  $d\geq 2$ rational map  of form 
\begin{equation}\label{phi-def}
\phi(z)=\frac{a_0+a_1z+\cdots +a_{d-1}z^{d-1}+a_d z^d}{b_1z+\cdots + b_{d-1}z^{d-1}+ b_d z^d}
\end{equation}
  with  $a_i,b_j \in \Q_2$ and $a_d=b_d=1$, we set $A_\phi:=\sum_{i\geq 0}a_i$,
$B_\phi:=\sum_{j\geq 1}b_j$,  $A_{\phi,1}:=\sum_{i\geq 0}a_{2i+1}$, $A_{\phi,2}:=\sum_{i\geq 0}a_{4i+1}$ and $A_{\phi,3}:=\sum_{i\geq 0}a_{4i+3}$.

\begin{theorem}\label{p=2}
Let $\phi$ be defined as (\ref{phi-def}).  If  $\phi$  has good reduction  and   is minimal on $\mathbb{P}^{1}(\Q_2)$,    then 
\begin{equation}\label{equ1}
\begin{cases}
     a_i,b_j \in \Z_2, \hbox{~~~~~~~for $0\leq i \leq d-1$ and $1\leq j \leq d-1$},\\
     a_{0}\equiv 1 \ ({\rm mod}   \ 2),  \\
     B_{\phi} \equiv 1\ ({\rm mod}   \ 2), \\
     A_{\phi} \equiv 2 \ ({\rm mod}   \ 4), \\
     A_{\phi,1} \equiv 1 \ ({\rm mod}   \ 2),\\
     b_1 \equiv 1 \ ({\rm mod}   \ 2),\\
     a_{d-1}-b_{d-1} \equiv 1 \ ({\rm mod}   \ 2),\\
    a_0b_1(a_{d-1}-b_{d-1})(A_{\phi,2}-A_{\phi,3})B_\phi+\\
\quad  \quad 2(b_2-a_1+a_{d-2}-b_{d-2}+b_{d-1}+A_{\phi,3}) \equiv 1 \ ({\rm mod}   \ 4).
\end{cases}
\end{equation}
Conversely, the condition {\rm( \ref{equ1})} implies that $\phi$ is $1$-Lipschitz continuous and minimal on $\mathbb{P}^{1}(\Q_2)$. 
\end{theorem}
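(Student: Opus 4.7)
The plan is to treat (\ref{equ1}) as the coefficient-level translation of Theorem \ref{minimal} specialized to $p=2$. Since $\mathbb{P}^1(\mathbb{F}_2)=\{0,1,\infty\}$ and the normalization $a_d=b_d=1$ forces $\phi(0)=\infty$ and $\phi(\infty)=1$ automatically, condition (1) of Theorem \ref{minimal} demands the $3$-cycle $0\mapsto\infty\mapsto 1\mapsto 0$, condition (2) demands $|\phi^3(0)|_2=1/2$ and $(\phi^3)'(0)\equiv 1\pmod 2$, and condition (3) demands $|\phi^6(0)|_2=1/4$. I would derive each line of (\ref{equ1}) from one of these in turn, then reverse the argument for the converse.

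For the forward direction I would first record that good reduction, together with $a_d=b_d=1$, forces $a_i,b_j\in\Z_2$ (otherwise clearing denominators would move $a_d,b_d$ into $2\Z_2$ and drop the reduced degree). Transitivity, via $\overline{\phi(1)}=0$, then yields $A_\phi\equiv 0\pmod 2$ and $B_\phi\in\Z_2^\times$, while $\overline{\phi(0)}=\infty$ gives $a_0\in\Z_2^\times$. Since $\phi^3(0)=\phi(1)=A_\phi/B_\phi$, the equality $|\phi^3(0)|_2=1/2$ refines this to $A_\phi\equiv 2\pmod 4$. The derivative $(\phi^3)'(0)$ I would compute by the chain rule applied to the local expansions
$$
\phi(z)=\frac{a_0}{b_1 z}+O(1),\quad \phi(1+\xi)=\frac{A_\phi}{B_\phi}+\frac{N'(1)}{B_\phi}\xi+O(\xi^2),\quad \phi(1/u)=1+(a_{d-1}-b_{d-1})u+O(u^2),
$$
where the $A_\phi D'(1)$ correction in the middle expansion falls in $2\Z_2$. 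Using $N'(1)\equiv A_{\phi,1}\pmod 2$, composition yields
$$
(\phi^3)'(0)\equiv\frac{b_1\,A_{\phi,1}\,(a_{d-1}-b_{d-1})}{a_0\,B_\phi}\pmod 2,
$$
and requiring this to be $\equiv 1\pmod 2$ is equivalent to the next block of odd-parity conditions in (\ref{equ1}).

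The principal obstacle is the final, longest congruence, which encodes $|\phi^6(0)|_2=1/4$. Setting $c:=\phi^3(0)$ (so $v_2(c)=1$) and Taylor-expanding $\phi^3(z)=c+\alpha z+\alpha_2 z^2+O(z^3)$, with $\alpha,\alpha_2\in\Z_2$ by good reduction of $\phi^3$, substitution gives
$$
\phi^6(0)=c(1+\alpha)+\alpha_2 c^2+O(c^3),
$$
the error term having valuation $\geq 3$. Since $v_2(1+\alpha)\geq 1$, both displayed terms have valuation $\geq 2$; the requirement $v_2(\phi^6(0))=2$ becomes, after dividing by $4$ and reducing mod $2$, a single linear congruence in $(1+\alpha)/2$ and $\alpha_2$. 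To evaluate it I would push the chain-rule computation to second order, expanding $\phi$ to order two at each of $0,1,\infty$: this is where $b_2,a_1,a_{d-2},b_{d-2}$ and the partial sums $A_{\phi,2},A_{\phi,3}$ enter (via $N''(1)/2\equiv A_{\phi,3}\pmod 2$, and the analogous identifications at $0$ and $\infty$). Composing and collecting terms modulo $4$, then clearing by the units $a_0,b_1,B_\phi$, should reproduce the final line of (\ref{equ1}) term for term. This algebraic bookkeeping is the main hurdle; it is long but strictly mechanical.

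For the converse I would reverse every step. The containments and unit conditions in (\ref{equ1}) guarantee that the reductions of numerator and denominator of $\phi$ are coprime and of full degree, placing $\phi$ in the good-reduction regime and hence making it $1$-Lipschitz on $\mathbb{P}^1(\Q_2)$ by \cite[p.59]{SilvermanGTM241}. The same computations of $\phi^3(0)$, $(\phi^3)'(0)$, and $\phi^6(0)$ then verify conditions (1)--(3) of Theorem \ref{minimal}, yielding minimality.
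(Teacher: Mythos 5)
Your forward direction is essentially the paper's argument: you normalize at the three points $0,\infty,1$, factor $\phi^{3}$ through the coordinate changes $\psi=1/\phi$ and $\varphi=\phi(1/z)$, extract the first seven conditions of (\ref{equ1}) from the transitivity of $\overline{\phi}$, from $\phi(1)=A_\phi/B_\phi\equiv 2\ ({\rm mod}\ 4)$ and from $(\phi^{3})'(0)\equiv 1\ ({\rm mod}\ 2)$, and then reduce the level-$3$ condition $\phi^{6}(0)\equiv 4\ ({\rm mod}\ 8)$ to a congruence on $(\phi^{3})'(0)$ and $(\phi^{3})''(0)$ modulo $4$, evaluated by the second-order chain rule. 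This is exactly the paper's route (Lemmas \ref{mod1} and \ref{mod4} and the condition $(\phi^{3})'(0)+(\phi^{3})''(0)\equiv 1\ ({\rm mod}\ 4)$). You defer the final bookkeeping as mechanical, which is fair, but one stated identity is off: since $\sum_i i(i-1)a_i\equiv 2\sum a_{4i+2}+2\sum a_{4i+3}\ ({\rm mod}\ 4)$, the correct statement is $N''(1)/2\equiv\sum a_{4i+2}+A_{\phi,3}\ ({\rm mod}\ 2)$, not $\equiv A_{\phi,3}$; the extra term $\sum a_{4i+2}$ drops out of the final congruence only because it cancels against the matching contribution of $A'_\phi$ taken modulo $4$. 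This would surface and resolve itself once the computation is carried out.

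The converse contains a genuine gap. You propose to show that (\ref{equ1}) forces the reduced numerator and denominator to be coprime of full degree, hence good reduction, and then to invoke Theorem \ref{minimal}. But (\ref{equ1}) only controls the reductions at the three points $0,1,\infty$ together with some coefficient congruences; it says nothing about common irreducible factors of $\overline{f}$ and $\overline{g}$ of degree $\geq 2$, so it does not imply good reduction. Indeed the implication is provably false: Corollary \ref{non-exist} shows that no degree-$4$ map with good reduction is minimal on $\mathbb{P}^{1}(\Q_2)$, while Section \ref{deg4} exhibits degree-$4$ maps satisfying (\ref{equ1}) that are minimal --- so for $d=4$ every map satisfying (\ref{equ1}) has bad reduction. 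The paper's converse avoids good reduction entirely: the first four conditions of (\ref{equ1}) feed into Lemma \ref{lem-1-Lip}, applied in the three local coordinates $\psi$, $\varphi-1$, $\phi(z+1)$, to show directly that $\phi$ is $1$-Lipschitz and that $\phi^{3}$ restricted to $2\Z_2$ is a power series with coefficients in $\Z_2$; minimality then follows from the power-series criterion (Corollary \ref{mininduce}), which needs only this integral expansion. You must replace your appeal to Theorem \ref{minimal} by that argument.
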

\begin{corollary}\label{non-exist}
Let $\phi \in \Q_2(z)$ be a rational map of degree  $2$, $3$ or $4$ with good reduction. Then the dynamical system $(\mathbb{P}^{1}(\Q_2),\phi)$ is not minimal.
\end{corollary}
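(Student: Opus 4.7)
The plan is to argue by contradiction using Theorem \ref{p=2}. Suppose $\phi\in\Q_2(z)$ of degree $d\in\{2,3,4\}$ has good reduction and is minimal on $\mathbb{P}^{1}(\Q_2)$. First I would bring $\phi$ into the normal form (\ref{phi-def}) without sacrificing good reduction: Theorem \ref{minimal}(1) forces $\overline{\phi}$ to be a $3$-cycle on $\mathbb{P}^{1}(\mathbb{F}_2)$, so for any $z_0\in\mathbb{P}^{1}(\Q_2)$ the points $\overline{z_0}, \overline{\phi(z_0)}, \overline{\phi^2(z_0)}$ are pairwise distinct; consequently the linear-fractional change of coordinates $g$ constructed just before Theorem \ref{p=2} lies in $\mathrm{PGL}_2(\Z_2)$, and the conjugate $g\circ\phi\circ g^{-1}$ retains both good reduction and minimality. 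Thus Theorem \ref{p=2} applies and condition (\ref{equ1}) holds.

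The case $d=2$ is immediate: since $b_d=b_2=1$, the congruence $B_\phi=b_1+1\equiv 1\pmod 2$ forces $b_1$ to be even, contradicting $b_1\equiv 1\pmod 2$ in (\ref{equ1}).

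For $d=3$ and $d=4$ the strategy is to show that the parity constraints in (\ref{equ1}) completely determine $\overline{f}$ and $\overline{g}$ modulo $2$, and that the resulting pair always admits a non-trivial common factor in $\mathbb{F}_2[z]$, contradicting good reduction. For $d=3$ one extracts directly that $a_0,b_1,b_2$ are odd and $a_1,a_2$ are even, giving $\overline{f}(z)=1+z^3=(1+z)(1+z+z^2)$ and $\overline{g}(z)=z+z^2+z^3=z(1+z+z^2)$, which share $1+z+z^2$. For $d=4$ the additional constraint $A_{\phi,1}=a_1+a_3\equiv 1\pmod 2$ forces exactly one of $a_1,a_3$ to be odd, producing two sub-cases; chasing the remaining parities through (\ref{equ1}) shows that $\overline{f}$ and $\overline{g}$ share the irreducible cubic $z^3+z+1$ when $a_1$ is even and $a_3$ is odd, and $z^3+z^2+1$ in the opposite sub-case.

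The delicate point is justifying that the normal-form conjugation preserves good reduction, which relies crucially on $\#\mathbb{P}^{1}(\mathbb{F}_2)=3$ together with Theorem \ref{minimal}(1); the subsequent arithmetic in $\mathbb{F}_2[z]$ is routine, and in fact the elaborate mod $4$ condition at the bottom of (\ref{equ1}) plays no role, since the obstruction to good reduction already shows up modulo $2$.
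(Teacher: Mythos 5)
Your proposal is correct, and I verified the parity chases: for $d=2$ the conditions $B_\phi=b_1+1\equiv 1$ and $b_1\equiv 1\pmod 2$ do clash; for $d=3$ the conditions force $\overline{f}=1+z^3=(1+z)(1+z+z^2)$ and $\overline{g}=z(1+z+z^2)$; for $d=4$ the two sub-cases give $\overline{f}=(1+z)(1+z+z^3)$, $\overline{g}=z(1+z+z^3)$ when $a_1$ is even and $a_3$ is odd, and $\overline{f}=(1+z)(1+z^2+z^3)$, $\overline{g}=z(1+z^2+z^3)$ in the opposite sub-case. The overall strategy --- contradiction via the necessary conditions (\ref{equ1}) of Theorem \ref{p=2} together with good reduction --- is the same as the paper's, but the individual cases are handled differently. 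The paper disposes of degree $2$ by a geometric argument (minimality plus good reduction makes $\phi$ an isometry, hence injective, whereas a quadratic map without critical points in $\mathbb{P}^1(\Q_2)$ gives every point $0$ or $2$ preimages), while you get an immediate coefficient contradiction; for degree $4$ the paper runs the logic in the opposite direction, listing the two irreducible cubics over $\mathbb{F}_2$ that good reduction and transitivity permit and then showing both force $a_3-b_3\equiv 0\pmod 2$ against the seventh line of (\ref{equ1}), whereas you derive the reductions from (\ref{equ1}) and exhibit the forbidden common factor. The two versions are essentially dual and equally valid; yours has the merit of treating all three degrees uniformly from (\ref{equ1}) alone, and of making explicit the point the paper leaves implicit, namely that the normalizing conjugation lies in $\mathrm{PGL}_2(\Z_2)$ (because the three reductions $\overline{z_0},\overline{\phi(z_0)},\overline{\phi^2(z_0)}$ are distinct) and therefore preserves good reduction.
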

 A complete characterization of minimal polynomial maps  on $\Zp$ in terms of their coefficients are given in \cite{Larin02} for $p = 2$ and in \cite{DP09} for $p=3$.  However, there is still no complete description for $p\geq 5$. On the other hand,  the characterizations of minimal Mahler  series  and van der Put series 
 by their coefficients are investigated in \cite{Ana94,Anashin-Khrennikov-AAD} and in \cite{Anashin-Khrennikov-Yurova,Jeong2013} respectively.

The above condition (\ref{equ1}) could be quickly checked by  computer, since there are only arithmetic operations `$+, -, \times$' in the  equations of (\ref{equ1}). 
By  condition   (\ref{equ1}) and a quick computer computation, we  obtain a series of   minimal rational maps  in $\Q_2(z)$ of degree  $4$  which is $1$-Lipschitz but do not have good reduction. See the table in Section \ref{deg4}.

The paper is organized as follows. In Section \ref{prel}, we give some preliminaries of $\P$,  including the spherical metric  and the tree structure of $\P$.
Section \ref{induc} is devoted to the induced dynamical systems on the vertices.  The proofs of Theorems \ref{minimal}--\ref{structure-minimal} are given in  Section \ref{pro}. In Section \ref{pequal2}, we characterize the minimal rational maps with good reduction  in terms of their  coefficients for the case $p=2$. Theorem \ref{p=2} and Corollary \ref{non-exist} will be proved in this section.   To illustrate our result, in the last section, we present  two rational maps with good reduction whose  exact minimal decompositions  are described when $p=3$.

\medskip

\section{Projective line } \label{prel}
 Any point in the projective line $\P$ may be given in homogeneous coordinates by a pair
$[x_1 : x_2]$
of points in $\Qp$ which are not both zero. Two such pairs are equal if they differ by an overall (nonzero) factor $\lambda \in \Q_p^*$:
$$[x_1 : x_2] = [\lambda x_1 : \lambda x_2].$$
The field $\Qp$ may be identified with the subset of $\P$ given by
$$\left\{[x : 1] \in \mathbb P^1(\Qp) \mid x \in \Qp\right\}.$$
This subset contains  all points  in $\P$ except one: the point  of infinity, which may be given as
$\infty = [1 : 0].$

The spherical metric defined on $\P$ is analogous to the standard spherical metric on the
Riemann sphere. If $P=[x_1,y_1]$ and $Q=[x_2,y_2]$ are two points in $\P$, we define
 $$\rho(P,Q)=\frac{|x_1y_2-x_2y_1|_p}{\max\{|x_1|_{p},|y_1|_{p}\}\max\{|x_2|_{p},|y_{2}|_{p}\}}$$
 or, viewing $\mathbb{P}^{1}(\Qp)$ as $\Qp\cup\{\infty\}$, for $z_1,z_2 \in \Qp\cup \{\infty\}$
 we define
 $$\rho(z_1,z_2)=\frac{|z_1-z_2|_{p}}{\max\{|z_1|_{p},1\}\max\{|z_2|_{p},1\}}  \qquad\mbox{if~}z_{1},z_{2}\in \Qp,$$
 and
 $$\rho(z,\infty)=\left\{
                    \begin{array}{ll}
                      1, & \mbox{if $|z|_{p}\leq 1$,} \\
                      1/|z|_{p}, & \mbox{if $|z|_{p}> 1$.}
                    \end{array}
                  \right.
 $$

Remark that the restriction of the spherical metric on the ring $\Zp:=\{x\in \Q_p, |x|\leq 1\}$ of $p$-adic integers is same as the metric induced by the absolute value $|\cdot|_p$.

A rational map $\phi \in \Qp(z)$ induces a transformation  on  $\P$.
Rational maps  are  always Lipschitz continuous on $\P$
with respect to the spherical metric (see \cite[Theorem 2.14.]{SilvermanGTM241}).
In particular,  rational maps  with  good reduction  are   $1$-Lipschitz  continuous.

\begin{lemma}[\cite{SilvermanGTM241}, p.59] Let $\phi\in \Qp(z)$ be a rational map with good reduction. Then the map $\phi: \P \mapsto \P$  is $1$-Lipschitz  continuous:
$$\rho(\phi(P),\phi(Q))\leq \rho(P,Q), ~~~~\forall P,Q \in \P. $$
\end{lemma}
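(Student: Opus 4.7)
The plan is to prove the inequality by working in homogeneous coordinates and using the resultant characterization of good reduction. Write $\phi=[f:g]$ with $f,g\in\Zp[x,y]$ homogeneous of degree $d=\deg\phi$, chosen so that at least one coefficient among $f,g$ has absolute value $1$. Good reduction then becomes the algebraic statement that $\overline{f},\overline{g}$ have no common zero in $\mathbb{P}^1(\overline{\mathbb{F}_p})$, which is equivalent to $\mathrm{Res}(f,g)$ being a unit in $\Zp$. This reformulation is the input I will lean on throughout.

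Next, I would pick representatives $P=[x_1:y_1]$ and $Q=[x_2:y_2]$ with $\max(|x_i|_p,|y_i|_p)=1$, so that $\rho(P,Q)=|x_1y_2-x_2y_1|_p$. I then need to bound
$$\rho(\phi(P),\phi(Q))=\frac{|f(x_1,y_1)g(x_2,y_2)-f(x_2,y_2)g(x_1,y_1)|_p}{\max(|f(x_1,y_1)|_p,|g(x_1,y_1)|_p)\,\max(|f(x_2,y_2)|_p,|g(x_2,y_2)|_p)}.$$
For the numerator, observe that the polynomial $F(x_1,y_1,x_2,y_2):=fg'-f'g$ (with primes denoting the substitution at $(x_2,y_2)$) has coefficients in $\Zp$ and vanishes on the locus $x_1y_2=x_2y_1$, since there $[x_1:y_1]=[x_2:y_2]$. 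A standard divisibility argument in $\Zp[x_1,y_1,x_2,y_2]$ factors $F=(x_1y_2-x_2y_1)\cdot H$ with $H\in\Zp[x_1,y_1,x_2,y_2]$, and then the ultrametric inequality gives $|F|_p\le |x_1y_2-x_2y_1|_p$ as soon as the $x_i,y_i$ are in $\Zp$.

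For the denominator I invoke the classical resultant identity: there exist $A,B,C,D\in\Zp[x,y]$ homogeneous with
$$A(x,y)f(x,y)+B(x,y)g(x,y)=\mathrm{Res}(f,g)\,x^{2d-1},\qquad C(x,y)f(x,y)+D(x,y)g(x,y)=\mathrm{Res}(f,g)\,y^{2d-1}.$$
Since $\mathrm{Res}(f,g)\in\Zp^\times$ under good reduction and $\max(|x|_p,|y|_p)=1$, at least one of the two right-hand sides has absolute value $1$, forcing $\max(|f(x,y)|_p,|g(x,y)|_p)=1$. Applying this to $(x_i,y_i)$ for $i=1,2$ makes the denominator equal to $1$, so combining with the numerator bound yields $\rho(\phi(P),\phi(Q))\le\rho(P,Q)$.

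The main obstacle is really the resultant step: linking the geometric notion of good reduction to the arithmetic identity that keeps $\max(|f|_p,|g|_p)$ equal to $1$ on the unit locus. The factorization of $F$ through $x_1y_2-x_2y_1$ is a routine Nullstellensatz-type argument, and the normalization of homogeneous coordinates is bookkeeping; but without the resultant identity there is no way to control the denominator of $\rho(\phi(P),\phi(Q))$ uniformly, which is precisely where good reduction is indispensable.
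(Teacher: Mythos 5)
Your argument is correct and is essentially the proof of the cited result in Silverman (Theorem 2.17 together with Propositions 2.13--2.15 of \cite{SilvermanGTM241}), which is the source the paper relies on rather than giving its own proof: the numerator is controlled by factoring $f(x_1,y_1)g(x_2,y_2)-f(x_2,y_2)g(x_1,y_1)$ through $x_1y_2-x_2y_1$ over $\Zp$, and the denominator is pinned at $1$ via the resultant identities combined with $\mathrm{Res}(f,g)\in\Zp^{\times}$, which is exactly where good reduction enters. No gaps; this matches the standard approach.
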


\begin{figure}
  \centering
  \includegraphics[width=0.5\textwidth]{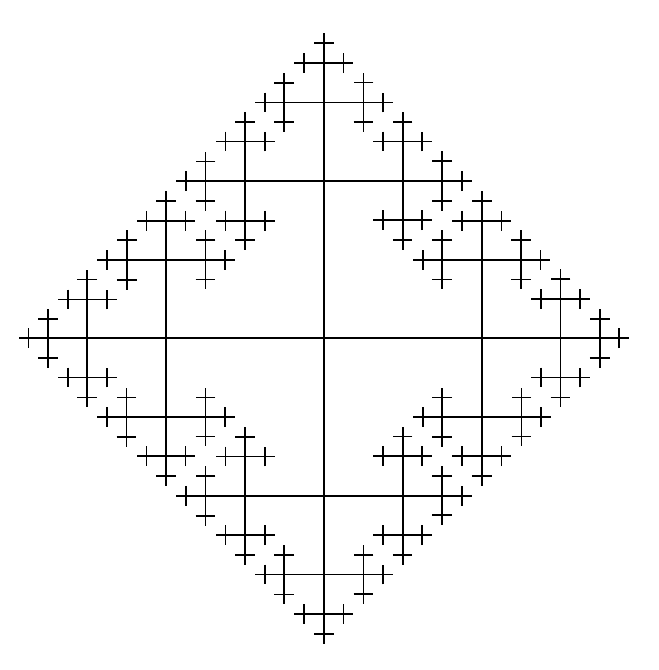}\\
  \caption{Tree structure of $\mathbb{P}^1(\Q_3)$. The points of $\mathbb{P}^1(\Q_3)$ are considered as the boundary points of the infinite tree.}\label{3adic}
\end{figure}

For $a\in \P$ and an integer $n\geq 1$, denote by
$$B_{n}(a):=\{x\in \P: \rho(x,a)\leq p^{-n}\}$$  the  \emph{ball} of radius $p^{-n}$ centered at $a$.
The projective line $\P$  consists of $p+1$ disjoint balls of radius $p^{-1}$,
$$\P= B_{1}(\infty)\bigsqcup\Big(\bigsqcup_{0\leq i \leq p-1} B_{1}(i)\Big).$$
For each integer $n\geq 1$, every ball of radius $p^{-n}$ consists of $p$ disjoint  balls of radius $p^{-n-1}$. For example, $B_1(\infty)$ consists of $B_2(1/p),B_2(2/p), \cdots, B_2((p-1)/p), B_2(\infty)$.

As mentioned in Section \ref{int}, the projective line $\P$ over $\Q_p$ could be considered as an infinite $(p+1)$-Caylay tree: the branch index of each vertex is
$p+1$, i.e. each vertex is an endpoint of $p+1$ edges. There is a centered vertex which is called Gauss point. Excluding Gauss point, each vertex corresponds to a ball in $\P$ of radius strictly less then one (with respect to the spherical metric).  The Gauss point can be considered as the ball of radius $1$  centered at the origin which is actually the whole space $\P$.  The set of edges of  the tree consists of the pairs of balls  $(B,B^{\prime})$ of radius $\leq 1$ such that
 $$B^{\prime}\subset B, \ r(B) =p \cdot r(B^{\prime})$$
  where  $r(B)$  and $r(B^{\prime})$  are the radii of balls $B$ and $B^{\prime}$ respectively.
  
The points in $\P$ are the boundary points of the tree. 
Remind that a vertex is said to be at  level $n (n\geq 1)$ if there are $n$  edges between Gauss point and the vertex. A vertex at level $n$ corresponds a ball of radius $p^{-n}$. The projective line $\P$ consists of $(p+1)p^{n-1}$ disjoint  balls of radius $p^{-n}$.
There are $(p+1)p^{n-1}$ vertices at level $n$. For example, see Figures \ref{2adic} and \ref{3adic} for the tree structures of $\mathbb{P}^{1}(\Q_2)$ and $\mathbb{P}^{1}(\Q_3)$.

\medskip

\section{Induced dynamics}\label{induc}
For each positive integer $n$, $\P$  consists of $(p+1)p^{n-1}$  disjoint balls of radius $p^{-n}$.
Denote by $\mathfrak{B}_n$ the set of the $(p+1)p^{n-1}$  disjoint balls of radius $p^{-n}$. In general, any $1$-Lipschitz continuous map  $\phi: \P \rightarrow \P$ induces a transformation  on   $\mathfrak{B}_n$. Denote by
$\phi_n$ the induced map of $\phi$ on $\mathfrak{B}_n$, i.e. $$\phi_n(B_{n}(x))=B_{n}(\phi(x)), \ \ \forall x\in \P.$$
As $\P$ is considered as  an infinite tree, for each positive integer $n$, there is a one-to-one correspondence between $\mathfrak{B}_n$  and the vertices of the tree at level $n$.    The $1$-Lipschitz continuous map  $\phi$ induces a transformation on the tree under which the set of vertices of  each level is  invariant.

Many properties of the dynamics $\phi$ on $\P$ are linked to those of $\phi_n$ on $\mathfrak{B}_n$.

\begin{proposition}\label{prop}
Let $\phi:\P \rightarrow \P$ be a $1$-Lipschitz continuous map. Then the system $(\P, \phi)$ is minimal if and only if
the finite system $(\mathfrak{B}_n,\phi_n)$ is minimal for all integers $n\geq 1$.
\end{proposition}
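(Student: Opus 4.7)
\emph{Plan.} The proof will ride on the observation that, for the spherical metric on $\P$, the balls $B_n(x)$ with $n\ge 1$ form a neighborhood base of the topology, and that $1$-Lipschitz continuity of $\phi$ makes the map $\phi_n$ genuinely well defined on $\mathfrak{B}_n$. Before either direction I will record this last point: if $x,y\in B_n(z)$, then $\rho(\phi(x),\phi(y))\le \rho(x,y)\le p^{-n}$, so $\phi(x)$ and $\phi(y)$ lie in a common ball of radius $p^{-n}$, which by the ultrametric dichotomy (distinct balls of equal radius are disjoint) is unique; hence $\phi_n$ is a well-defined self-map of the finite set $\mathfrak{B}_n$.

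For the ``only if'' direction, fix $n\ge 1$ and arbitrary $B,B'\in\mathfrak{B}_n$. Pick any $x\in B$. By minimality of $(\P,\phi)$ the orbit of $x$ is dense in $\P$, so in particular it meets the non-empty open set $B'$; choose $k$ with $\phi^k(x)\in B'$. Then $\phi_n^k(B)=B_n(\phi^k(x))=B'$. Since $B,B'$ were arbitrary, $\phi_n$ is a transitive self-map of the finite set $\mathfrak{B}_n$, which on a finite discrete space is equivalent to the induced system being minimal (it forces $\phi_n$ to be a single cyclic permutation).

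For the ``if'' direction, fix $x\in\P$ and any non-empty open $U\subset\P$. Choose $n$ and $y$ with $B_n(y)\subset U$, which is possible because the balls $B_n(\cdot)$ form a base. By the assumed minimality of $(\mathfrak{B}_n,\phi_n)$ there is some $k$ with $\phi_n^k(B_n(x))=B_n(y)$, and by definition of $\phi_n$ this gives $\phi^k(x)\in B_n(y)\subset U$. Hence the orbit of every $x\in\P$ intersects every non-empty open set, which is the minimality of $(\P,\phi)$.

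No step here is a genuine obstacle; the argument is essentially topological bookkeeping. The only subtlety worth flagging is the passage from transitivity on a finite set to minimality, and the use of the $1$-Lipschitz hypothesis precisely at the point of well-definedness of $\phi_n$: without it, a point and its neighbors could be spread across several level-$n$ balls and the induced map on $\mathfrak{B}_n$ would not make sense.
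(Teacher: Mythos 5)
Your proof is correct and follows essentially the same route as the paper: both directions come down to the fact that the balls $B_n(\cdot)$ form a base of the topology and that minimality of $(\mathfrak{B}_n,\phi_n)$ lets the orbit of any point reach any prescribed level-$n$ ball. The paper merely phrases the ``if'' direction as a proof by contradiction and declares the ``only if'' direction obvious, while you spell both out directly (and also record the well-definedness of $\phi_n$, which the paper takes for granted).
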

The  equivalence in Proposition \ref{prop} is similar  to that for $1$-Lipschitz continuous maps on the ring $\Z_p$  or on   a discrete valuation domain (see \cite[p.\,111]{Ana94}, \cite[Theorem 1.2]{Ana06} and \cite[Corollary 4]{CFF09}).

\begin{proof}The ``\,only if\," part of the statement is obvious, we prove only the  ``\,if\," part. Suppose that $(\P, \phi)$ is not minimal. Then there exist an open set $S$ and a point $z\in \P$
 such that $\phi^{ k}(z)\notin S $ for all integers $k\geq 1$. 
 Since the set $S$ is open, there exists a  ball $B_{n_0}(z_0) \subset S$ for some $z_0\in \P$ and some integer $n_0$ large enough.  Recall that $(\mathfrak{B}_n,\phi_n)$ is minimal for all integers $n>0$. So there  exists an integer $k_0$ such that $\phi_{n_0}^{k_0}(B_{n_0}(z))= B_{n_0}(z_0)$.  This  implies  $\rho(\phi^{k_0}(z),z_0)\leq p^{-n_0}$, which   contradicts the fact that $\phi^{k_0}(z)\notin S$.
\end{proof}

Recall that a rational map with good reduction is $1$-Lipschitz continuous.
For a rational map $\phi \in \Qp(z)$ with good reduction, to study the minimality of the system $(\P,\phi)$,
it suffices to  study the induced dynamics $(\mathfrak{B}_n,\phi_n)$ for all integers $n\geq 1 $.

\medskip
Now let us first study the dynamics $(\mathfrak{B}_1,\phi_1)$ at level $1$. 
Note that each point in $\PF$ corresponds to a ball in $\mathfrak{B}_1$. The induced map $\phi_1$ of $\phi$ can  be considered as the
reduction $\overline{\phi}$ acting on $\PF$. The map $\overline{\phi}: \PF \rightarrow \PF$ admits some periodic orbits.  A periodic orbit is call a  {\em cycle} of  $\overline{\phi}$.
The points outside any cycle will be  mapped into some cycle after several iterations.

Let $(\overline{x}_1,\cdots,\overline{x}_k)\subset \PF$ be a cycle of $\overline{\phi}$. To simplify notation, we consider  a transformation $f \in \mbox{PGL}_2(\Zp)$
with $f(x_1)=0$, where $x_1$ is a point in the ball corresponding to $\overline{x}_1$.
Replacing $x_1$ and $\phi$ with $0$ and $ f\circ \phi \circ f^{-1}$ respectively, we may assume that $\overline{x}_1=\overline{0}$.
Under this assumption, we will see in the following that $\phi^k$ acting on $p\Z_p$ is conjugate to a power series acting on $\Z_p$.

For two rational maps $\theta,\omega\in \Qp(z)$ with good reduction, the composition
$\theta\circ \omega$ has good reduction, and $\overline{\theta\circ \omega}=\overline{\theta}\circ\overline{\omega}$  (see \cite[p.59,  Theorem 2.18]{SilvermanGTM241}).
So $\phi^{ k}$ has good reduction and $\overline{0}$ is a fixed point of $\overline{\phi^{ k}}$. We write $\phi^{ k}$ in the form
$$\phi^{ k}(z)=\frac{a_0+a_1z+\cdots a_n z^n}{b_0+b_1z+\cdots b_n z^n}$$
with coefficients $a_0, \cdots a_d, b_0,\cdots  b_d \in \Zp$ and at least one coefficient in $\Zp \setminus p\Zp$. The fact that $\overline{0}$ is a fixed point of $\overline{\phi^{ k}}$ implies that
$\phi^{ k}(0)=a_0/b_0  \equiv 0 \ ({\rm mod} \ p)$.
Since $\phi^{ k}$ has good reduction, we have $a_0 \in p\Zp$ and $b_0\in \Zp\setminus p\Z_p$. Otherwise, the numerator and denominator of  the reduction   have the common factor $z$, which implies $\deg \overline{\phi^{ k}} <\deg  \phi^{ k}$.
Multiplying numerator and denominator by $b_0^{-1}$, we may thus write $\phi^{ k}$ in the
form $$\phi^{ k}(z)=\frac{a_0+a_1z+\cdots a_n z^n}{1+b_1z+\cdots b_n z^n}.$$
Then by the following Lemma \ref{lem-1-Lip}, $\phi^{ k}$ is $1$-Lipschitz from $p\Z_p$ to itself and
\[
\phi^k(z)=a_0+ \lambda z+ \lambda_2 z^2+ \lambda_3z^3+\cdots,
\] 
with $\lambda_i \in \Zp$ for all $i\geq 2$.

\begin{lemma}\label{lem-1-Lip}
Let $\phi\in \mathbb{Q}_p(z)$ be a rational map of the form
\[
\frac{a_0+a_1z+\cdots a_n z^n}{1+b_1z+\cdots b_n z^n}, \quad \text{with} \ a_j, b_j\in \mathbb{Z}_p.
\]
Then $\phi$ is $1$-Lipschitz on $p\mathbb{Z}_p$. Furthermore,
\begin{align}\label{inte-coe}
\phi(z)&=a_0+ \lambda z+ \lambda_2 z^2+ \lambda_3z^3+\cdots
\end{align}
with $\lambda_i \in \Zp$ for all $i\geq 2$. 
\end{lemma}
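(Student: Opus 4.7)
The plan is to reduce both assertions to the observation that the denominator $D(z) := 1 + b_1 z + \cdots + b_n z^n$ is a unit in $\mathbb{Z}_p$ whenever $z \in p\mathbb{Z}_p$. Indeed, for such $z$ each term $b_i z^i$ lies in $p\mathbb{Z}_p$ (since $b_i \in \mathbb{Z}_p$ and $|z^i|_p \le 1/p$), so $|D(z)|_p = 1$. This already shows $\phi$ maps $p\mathbb{Z}_p$ into $\mathbb{Z}_p$ and makes the denominator invertible.

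To get the power series expansion \eqref{inte-coe}, I would write $D(z) = 1 - g(z)$ with $g(z) = -b_1 z - \cdots - b_n z^n \in z\mathbb{Z}_p[z]$ and expand
\[
\frac{1}{D(z)} = \sum_{k=0}^{\infty} g(z)^{k}
\]
as a formal power series in $\mathbb{Z}_p[[z]]$. Since $g(z)$ has no constant term, the coefficient of any fixed $z^{j}$ in the sum above receives contributions only from the finitely many indices $k \le j$, so the formal sum does define an element of $\mathbb{Z}_p[[z]]$. Multiplying by the numerator (also in $\mathbb{Z}_p[z]$) gives $\phi(z) = a_0 + \lambda z + \lambda_2 z^2 + \lambda_3 z^3 + \cdots$ with all $\lambda_i \in \mathbb{Z}_p$; on $p\mathbb{Z}_p$ this formal series converges and equals the rational function by uniqueness.

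Once the integer power series expansion is in hand, the $1$-Lipschitz property follows by the standard telescoping trick: for $z_1, z_2 \in p\mathbb{Z}_p$,
\[
\phi(z_1) - \phi(z_2) = (z_1 - z_2)\sum_{i \ge 1} \lambda_i \bigl(z_1^{i-1} + z_1^{i-2}z_2 + \cdots + z_2^{i-1}\bigr),
\]
where the convergence of the inner series in $\mathbb{Z}_p$ is clear because $z_1^{i-1}, z_2^{i-1} \in p^{i-1}\mathbb{Z}_p$ (writing $\lambda = \lambda_1$ for the linear coefficient matches the statement). Each term of the series lies in $\mathbb{Z}_p$, hence the whole factor multiplying $(z_1 - z_2)$ has $p$-adic absolute value at most $1$, giving $|\phi(z_1) - \phi(z_2)|_p \le |z_1 - z_2|_p$.

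No real obstacle arises; the only mildly delicate point is justifying that the geometric expansion of $1/D(z)$ can be carried out as a formal series with $\mathbb{Z}_p$-coefficients (rather than merely as an analytic identity on $p\mathbb{Z}_p$), which is handled by the finiteness observation above. Everything else is routine manipulation of convergent series in $\mathbb{Z}_p$.
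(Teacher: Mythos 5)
Your proof is correct and follows essentially the same route as the paper: both arguments rest on expanding the reciprocal of the denominator as a geometric series with $\mathbb{Z}_p$ coefficients (the paper merely peels off the $a_0+\lambda z$ terms by long division first), and the $1$-Lipschitz claim, which the paper dismisses as obvious, is exactly the telescoping estimate you spell out.
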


\begin{proof}
The Taylor expansion of $\phi$ around $z = 0$, which in this
case can be obtained by simple long division,  gives
$$\phi(z)=a_0+ \lambda z+ \frac{A(z)}{1+zB(z)}z^2$$
with
$A(z), B(z) \in  \Zp[z]$ and $\lambda = \phi^{\prime}(0)\in \Zp$.
Observe that on $p\Zp$, we have 
$$\frac{1}{1+zB(z)}=1-zB(z)+z^2(B(z))^2-z^3(B(z))^3+ \cdots.
$$
Hence  we can write $\phi(z)$ as the form (\ref{inte-coe}).
Obviously,  $\phi$  induces a $1$-Lipschitz map on $p\Zp$.
\end{proof}
\bigskip

Now we study the dynamical system $(p\Zp,\phi^{ k})$.
For simplicity, we write $\varphi$ instead of $\phi^{ k}$.
Note that every power series  $$\varphi(z)=\sum_{i=0}^{\infty}\lambda_i z^i \in \Zp[[z]] $$ converges on $p\Zp$. If $\lambda_0\in p\Zp$, then $\varphi(p\Zp) \subset p\Zp$ and $\varphi$  induces a $1$-Lipschitz map from $p\Zp$ to itself.

In the remainder of this section we assume $\varphi(z)=\sum_{i=0}^{\infty}\lambda_i z^i \in \Zp[[z]]$ with $\lambda_0 \in p\Zp$. The dynamical system 
$(p\Zp,\varphi)$ is conjugate to a system $(\Z_p,\chi)$ by the transformation $f(z)=z/p$, i.e.
\[\diagram
p\Zp\rto^{\varphi~~} \dto_{ z/p} &p\Z_p  \dto^{z/p}\\
\Z_p\rto_{\chi~~} & \Z_p,
\enddiagram
\]
where $\chi(z)=\sum_{i=0}^{\infty}p^{i-1}\lambda_i z^i  \in \Zp[[z]]$   converges on $\Z_p$   and induces a map from  $\Z_p$ to itself. 

The dynamical system of $\chi$ on $\Z_p$ and its induced dynamics on $\Zp/p^n\Zp$ was studied in \cite{FLpre}.
Actually, the convergent series with coefficient in the integral ring $\mathcal{O}_K$ of a finite extension $K$ of $\Qp$ were  studied  as  dynamical systems on 
$\mathcal{O}_K$ in  \cite{FLpre}.    The system $(\Z_p, \theta)$ is a special case when $K=\Q_p$.  

In the following we shall translate results about $\theta$ in the language of results about $\varphi$. For more details on this subject, the reader may consult \cite{FL11,FLpre}.

For each $n\geq 1$, denote by $\varphi_n$ the induced map of $\varphi$ on $p\Zp/p^n\Zp$, i.e.,
$$\varphi(x \!\!\!\!\mod p^n) \equiv  \varphi(x) \ ({\rm mod}   \ p^n),$$
for all $x\in p\Zp$.
In the sprit of Proposition \ref{prop}, the minimality and minimal decomposition of the system $(p\Zp,\varphi)$ can be deduced form its induced dynamics $(p\Zp/p^n\Zp,\varphi_n)$. So we need to study the finite systems $(p\Zp/p^n\Zp,\varphi_n)$.
To this end, the main idea, which comes from \cite{DZunpu,Zievephd}, is to study the behaviour of systems $(p\Zp/p^n\Zp,\varphi_n)$ by induction. We refer the reader to \cite{FL11,FLpre} for the application of this idea to give a minimal decomposition theorem for any convergent power series in $\Zp[[z]]$.

A collection $\sigma=(x_1, \cdots, x_k)$ of $k$ distinct points in $p\Zp/p^n \Zp $ is  called a
{\em cycle} of $\varphi_n$ of length $k$ or a {\em$k$-cycle} at level $n$, if
$$ \varphi_n(x_1)=x_2, \cdots, \varphi_n(x_i)=x_{i+1}, \cdots, \varphi_n(x_k)=x_1.$$
Set
$$ X_\sigma:=\bigsqcup_{i=1}^k  X_i \ \ \mbox{\rm where}\ \
X_i:=\{ x_i +p^nt+p^{n+1}\Z_p; \ t=0,1,\cdots, p-1\} \subset p\Zp/p^{n+1}\Zp.$$
Then
\[
\varphi_{n+1}(X_i) \subset X_{i+1}  \ (1\leq i \leq k-1) \ \ \mbox{\rm
and}\ \  \varphi_{n+1}(X_k) \subset X_1.\]


In the following we are concerned with   the behavior of the finite dynamics
$\varphi_{n+1}$ on the  set $X_\sigma$ and determine all
cycles in $X_\sigma$ of $\varphi_{n+1}$, which will be  called  {\it lifts} of
$\sigma$. Remark that the length of any lift $\tilde{\sigma}$ of
$\sigma$ is a multiple of $k$.

  Let $\mathbb{X}_i=x_i+p^n \Zp$ be the closed disk of radius $p^{-n}$  corresponding  to $x_i\in \sigma$ and
  $$\mathbb{X}_{\sigma}:=\bigsqcup_{i=1}^{k} \mathbb{X}_i$$ be the clopen set corresponding to the cycle $\sigma$.

Let $\psi:=\varphi^{ k}$ be the $k$-th iteration of $\varphi$. Then, any point in $\sigma$
 is fixed by $\psi_n$, the $n$-th induced map of $\psi$. For any point $x\in \mathbb{X}_{\sigma}$,
 we have $(\psi(x)-x)/p^{n}\in \Zp$. Let
\begin{eqnarray}
& &\a_n(x):=\psi'(x)=\prod_{j=0}^{k-1} \varphi'(\varphi^{ j}(x)) \label{an}\\
& &\b_{n}(x):=\frac{\psi(x)-x}{p^n}=\frac{\varphi^{ k}(x)-x}{p^n}.\label{bn}
\end{eqnarray}
One can check that  $\a_n(x) \ ({\rm mod} \ p)$ is always constant on $\mathbb{X}_{\sigma}$. We should remark that under the condition that $\a_n \equiv 1 \ ({\rm mod} \ p)$,  $\b_{n}  \ ({\rm mod} \ p)$ is  also constant on  $\mathbb{X}_{\sigma}$\cite{FL11,FLpre} .
We distinguish the following four behaviours  of $\varphi_{n+1}$ on $X_{\sigma}$:\\
 \indent {\rm (a)} If $\a_n \equiv 1 \ ({\rm mod} \ p)$  and  $\b_{n} \not\equiv 0 \ ({\rm mod} \ p)$, then $\varphi_{n+1}$ restricted to $X_\sigma$ preserves a single  cycle of length $pk$. In this case we say $\sigma$ {\it grows}.\\
 \indent {\rm (b)} If $\a_n \equiv 1 \ ({\rm mod} \ p)$  and $\b_{n} \equiv 0 \ ({\rm mod} \ p)$, then $\varphi_{n+1}$ restricted to $X_\sigma$
 preserves
 $p$ cycles  of length $k$. In this case we say $\sigma$ {\it splits}. \\
 \indent {\rm (c)} If $\a_n \equiv 0 \ ({\rm mod} \ p)$, then $\varphi_{n+1}$ restricted to $X_\sigma$
 preserves
 one cycle of length $k$ and  the remaining points of $X_\sigma$ are mapped into this cycle.
  In this case we say $\sigma$ {\it grows tails}. \\
  \indent {\rm (d)} If $\a_n \not\equiv0, 1 \ ({\rm mod} \ p)$, then $\varphi_{n+1}$ restricted to
$X_{\sigma}$ preserves one cycle of length $k$ and $\frac{p-1}{\ell}$ cycles
of length $k\ell$, where $\ell$ is the order of $\a_n$ in the multiplicative group $\mathbb{F}_p\setminus \{0\}$. In this case we say $\sigma$ {\it partially splits}.

For $n\geq 1$, let $\sigma=(x_1,\dots,x_k)\subset p\Zp/p^n \Zp$ be a $k$-cycle and
let $\tilde{\sigma}$ be a lift of $\sigma$ which is a  of  $\varphi_{n+1}$ contained in
$X_{\sigma}$.
To illustrate the change of nature for a cycle to
its lifts,  we shall show the relationship between $(\a_n, \b_{n})$ and $(\a_{n+1},
\b_{n+1})$.  For the calculation of $(\a_{n+1} \!\!\!\mod p,
\b_{n+1}\!\!\!\mod p)$ from  $(\a_n \!\!\!\mod p, \b_{n} \!\!\!\mod p)$,  the interested reader may consult to \cite[Lemma 2]{FL11} and \cite[p.1636]{FLpre}.
 The following propositions predict  the behaviours of the lifts of a cycle $\sigma$.  
 
\begin{proposition}[\cite{FL11} Proposition 1, and \cite{FLpre} Proposition 4.4]
Let $\sigma $ be a  cycle of $\varphi_n$.\\
 \indent {\rm (1)} 
 If $\sigma$ grows or splits, then any lift $\tilde{\sigma}$ grows
or splits. \\
 \indent {\rm (2)}
If $\sigma$ grows tails, then the single lift $\tilde{\sigma}$
also grows tails. \\
 \indent {\rm (3)}
 If $\sigma$ partially splits, then the
 lift $\tilde{\sigma}$
 of the same length as $\sigma$ partially splits, and the other lifts  of length $k\ell$ grow or split.
\end{proposition}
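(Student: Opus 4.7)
The plan is to reclassify each lift $\tilde{\sigma}$ at level $n+1$ by computing $\a_{n+1}\pmod p$ through the chain rule and then invoking the dichotomy (a)--(d). The two input facts I will use are that $\a_n\pmod p$ is constant on $\mathbb{X}_\sigma$ (already noted above) and that the formal derivative of $\varphi\in\Zp[[z]]$ is again a power series with coefficients in $\Zp$, so that $y\equiv y'\pmod{p^n}$ on $p\Zp$ forces $\varphi'(y)\equiv\varphi'(y')\pmod p$.

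The central identity will be the following. Any lift $\tilde{\sigma}\subset X_\sigma$ has length $k'=mk$ with $m\in\{1,p,\ell\}$, according to whether the lift has the same length as $\sigma$, arises from growth, or arises from partial splitting into $k\ell$-cycles. For any $\tilde{x}\in\tilde{\sigma}$, the chain rule yields
\[
\a_{n+1}(\tilde{x})=\prod_{j=0}^{k'-1}\varphi'\bigl(\varphi^{ j}(\tilde{x})\bigr).
\]
Grouping the factors into $m$ consecutive blocks of length $k$, each block computes $\psi'$ evaluated at a point $\psi^{r}(\tilde{x})\in\mathbb{X}_\sigma$; the constancy of $\a_n\pmod p$ on $\mathbb{X}_\sigma$ then gives
\[
\a_{n+1}(\tilde{x})\equiv \a_n^{\,m}\pmod p.
\]

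From this formula the three conclusions drop out. In case (1), $\a_n\equiv 1\pmod p$, so $\a_{n+1}\equiv 1\pmod p$ for every lift regardless of its length; this is the common feature of types (a) and (b), so $\tilde{\sigma}$ grows or splits. In case (3), the same-length lift ($m=1$) inherits $\a_{n+1}\equiv\a_n\pmod p$, hence the same order $\ell$ in $\Fp\setminus\{0\}$ and partial splitting, while a length-$k\ell$ lift ($m=\ell$) has $\a_{n+1}\equiv\a_n^{\,\ell}\equiv 1\pmod p$ and therefore grows or splits. In case (2), $\a_n\equiv 0\pmod p$ and the single lift has $m=1$, so the identity gives $\a_{n+1}\equiv 0\pmod p$, confirming type (c).

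The main obstacle is actually a set-theoretic check specific to case (2): I also need to verify that the lift really consists of exactly one cycle of length $k$. I would dispatch this with a direct Taylor estimate: for $\tilde{x}\in\mathbb{X}_i$, the expansion $\psi(\tilde{x})=\psi(x_i)+\psi'(x_i)(\tilde{x}-x_i)+O((\tilde{x}-x_i)^{2})$ combined with $\psi'(x_i)\equiv 0\pmod p$ and $\tilde{x}-x_i\in p^{n}\Zp$ forces $\psi(\tilde{x})\equiv\psi(x_i)\pmod{p^{n+1}}$. Thus $\psi_{n+1}$ collapses each fibre $X_i$ onto the single point $y_i:=x_i+p^{n}(\b_n(x_i)\bmod p)$, which is therefore its unique fixed point in $X_i$; chaining the $y_i$'s by $\varphi_{n+1}$ produces the unique $k$-cycle asserted in (c), and every other point of $X_\sigma$ lands on this cycle within $k$ iterations.
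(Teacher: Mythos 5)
Your argument is correct and is essentially the proof given in the cited sources \cite{FL11,FLpre} to which the paper delegates this statement: the key point is precisely the chain-rule identity $\a_{n+1}(\tilde{x})\equiv\a_n^{\,m}\ ({\rm mod}\ p)$ for a lift of length $mk$, obtained by grouping the $mk$ factors into $m$ blocks each equal to $\psi'$ at a point of $\mathbb{X}_\sigma$ and using the constancy of $\a_n\ ({\rm mod}\ p)$ there, after which the three cases read off from the classification (a)--(d). Your additional Taylor-expansion check in case (2) is sound but redundant, since the fact that the lift is a single $k$-cycle is already part of the stated description of ``grows tails'' at level $n$.
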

\begin{proposition} [\cite{FL11, FLpre}] \label{minimalphi}
 Let $\sigma $ be a growing cycle of $\varphi_n$ and $\tilde{\sigma}$ be  the unique lift of $\sigma$. \\
\indent {\rm (1)} If $p >3$ and $n\geq 1$, then $\tilde{\sigma}$ also grows. \\
\indent {\rm (2)} If $p=3$ and $n \geq  2$, then $\tilde{\sigma}$ also grows. \\
\indent {\rm (3)} If $p=2$ and  $\tilde{\sigma}$ grows, then the lift of $\tilde{\sigma}$ grows.
\end{proposition}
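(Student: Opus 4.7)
\noindent\emph{Proof proposal.} The approach is to track the passage from level $n$ to level $n+1$ by a Taylor expansion of $\psi=\varphi^{k}$ around points of $\mathbb{X}_\sigma$, following \cite[Lemma 2]{FL11} and \cite[p.\,1636]{FLpre}. For the unique lift $\tilde{\sigma}$ of a growing cycle $\sigma$ (necessarily of length $pk$), one must verify both $\alpha_{n+1}\equiv 1\pmod p$ and $\beta_{n+1}\not\equiv 0\pmod p$ on $\mathbb{X}_{\tilde{\sigma}}$. The first is automatic: $\alpha_{n+1}(x)=(\psi^{p})'(x)=\prod_{j=0}^{p-1}\psi'(\psi^{j}(x))$, and each factor reduces to $1$ on $\mathbb{X}_\sigma$. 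The telescoping identity $\psi^{p}(x)-x=\sum_{j=0}^{p-1}p^{n}\beta_n(\psi^{j}(x))$ yields
\[
  \beta_{n+1}(x)=\frac{1}{p}\sum_{j=0}^{p-1}\beta_n(\psi^{j}(x)),
\]
and writing $\alpha_n=1+p\mu$ and expanding $\psi$ to second order (legitimate because $\psi\in\mathbb{Z}_p[[z]]$ by Lemma~\ref{lem-1-Lip}, and for odd $p$ one has $\psi''/2\in\mathbb{Z}_p$) gives the recursion
\[
  \beta_n(\psi(y))\equiv \bigl(1+p\mu(y)\bigr)\beta_n(y)+p^{n}\tfrac{\psi''(y)}{2}\beta_n(y)^{2}\pmod{p^{2n}}.
\]
Iterating for $j=0,\dots,p-1$ and summing produces a congruence
\[
  \beta_{n+1}(x)\equiv \beta_n(x)+\beta_n(x)\,A(x)+p^{n-1}\beta_n(x)^{2}\,C(x)\pmod p,
\]
where $A(x)$ and $C(x)$ are double sums of length $\binom{p}{2}$ built from the values $\mu(\psi^{i}(x))$ and $\psi''(\psi^{i}(x))/2$ respectively.

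The three cases of the proposition are read off from when $A(x)$ and $p^{n-1}C(x)$ vanish modulo $p$. When $n\geq 2$, the displacement $|\psi^{j}(x)-x|_p\leq p^{-n}\leq p^{-2}$ forces $\mu$ to be constant modulo $p$ on $\mathbb{X}_\sigma$; each correction then factors as a constant times $\binom{p}{2}=p(p-1)/2$, which is $\equiv 0\pmod p$ for any odd $p$. This yields~(2). For $p\geq 5$ and $n=1$, $\mu$ need not be constant mod $p$, but the divisibility $p\mid\binom{p}{3}=p(p-1)(p-2)/6$ (which fails precisely at $p=3$, since $\binom{3}{3}=1$) supplies the cancellation required to kill the remaining correction terms; this establishes~(1). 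For $p=2$ the factor $\tfrac12$ in $\psi''/2$ is not integral, so the quadratic Taylor term is of the same $2$-adic size as the linear one and no single lifting step can supply the needed cancellation. Under the hypothesis of~(3) that $\tilde{\sigma}$ itself already grows, applying the same machinery at level $n+1$ contributes the extra factor of $2$ needed to make the analogous correction vanish, closing~(3).

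The main obstacle is the $p$-adic bookkeeping in the small-prime cases~(1) at $n=1$ and~(3). In both, the second-order Taylor term has the same nominal $p$-adic order as the principal one, and one must isolate arithmetic cancellations—either the divisibility of $\binom{p}{3}$ for $p\geq 5$, or a two-step amplification for $p=2$—before the conclusion follows. These cancellations are the content of \cite[Lemma 2]{FL11} and \cite[Section~4]{FLpre}, and they apply here because $\psi=\varphi^{k}$ is an honest convergent power series in $\mathbb{Z}_p[[z]]$.
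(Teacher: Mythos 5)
Your overall strategy is the right one. The paper itself does not prove this proposition --- it imports it from \cite[Proposition 2, Corollary 1]{FL11} and \cite[Propositions 4.7--4.11]{FLpre} --- and your sketch correctly reconstructs the mechanism of those proofs: the telescoping identity $\beta_{n+1}(x)=\frac1p\sum_{j=0}^{p-1}\beta_n(\psi^j(x))$, the second-order Taylor expansion of $\psi=\varphi^k$, and the reduction of everything to whether two double sums with $\binom{p}{2}$ terms vanish modulo $p$. Parts (1) and (2) are handled essentially as in the cited source: for $n\ge 2$ the values $\mu(\psi^i(x))$ are constant mod $p$ because $\psi'$ has $\mathbb{Z}_p$ coefficients and $\psi^i(x)\equiv x\pmod{p^n}$, so each correction carries the factor $p(p-1)/2\equiv 0\pmod p$ for odd $p$; for $p\ge 5$ and $n=1$ the linear variation $\mu(\psi^i(x))\equiv\mu(x)+i\beta_n(x)\psi''(x)\pmod p$ is killed because $6$ is invertible mod $p$, which is exactly the $\binom{p}{3}$-type divisibility you invoke.

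There is, however, a genuine error in your explanation of why $p=2$ is exceptional, and it leaves part (3) unproved. You assert that ``the factor $\frac12$ in $\psi''/2$ is not integral'' for $p=2$. This is false: since $\psi\in\mathbb{Z}_2[[z]]$, the divided derivative $\psi''/2=\sum_i\binom{i}{2}\lambda_i z^{i-2}$ has coefficients in $\mathbb{Z}_2$ for every prime, and indeed $\psi''$ itself is $\equiv 0\pmod 2$ because $i(i-1)$ is always even. The true obstruction at $p=2$ is combinatorial: the double sum indexing the corrections has $\binom{p}{2}=p(p-1)/2$ terms, which is divisible by $p$ exactly when $p$ is odd; for $p=2$ it collapses to the single term $\beta_n(x)\mu(x)$ with $\mu=(\psi'-1)/2$, and this need not vanish mod $2$ --- which is why a growing cycle can fail to have a growing lift when $p=2$. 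Consequently your treatment of (3), ``applying the same machinery at level $n+1$ contributes the extra factor of $2$,'' is an assertion rather than an argument. What actually closes (3) is that for the lift one replaces $\psi$ by $\psi^2$, and, writing $\mu_0=\mu(x)$, $\mu_1=\mu(\psi(x))$, one has $\bigl((\psi^{2})'(x)-1\bigr)/2=\mu_0+\mu_1+2\mu_0\mu_1\equiv\mu_0+\mu_1\equiv 0\pmod 2$ since $\mu_1\equiv\mu_0\pmod 2$; it is this doubling of $\mu$, not any non-integrality of $\psi''/2$, that makes the correction vanish from the second lift onward. This is precisely the content of \cite[Corollary 1]{FL11}, and your proof is incomplete without it.
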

 For more details of Proposition \ref{minimalphi}, we refer the reader to \cite[Proposition 2]{FL11}
 for the first two assertion, and to \cite[Corollary 1]{FL11} for the third assertion.
 Remark that the results  in Proposition \ref{minimalphi} are presented only for polynomials in $\Z_p[z]$. Actually, these results also hold for convergent series 
 in $\Z_{p}[[z]]$ (see \cite[Propositions 4.7, 4.8, 4.10, 4.11]{FLpre} for a more general setting).

Let $E\subset p\Zp$ be a $\varphi$-invariant compact open set. Let 
$$E/p^n\Zp:=\{x\in \Zp/p^n\Zp:  \exists \ y \in E \text{ such that }  x\equiv y \ ({\rm mod} \  \ p^n)\}.$$  It is now well known that the
subsystem $(E, \varphi )$ is minimal if and only if the induced map $\varphi_{n} :E/p^n\Zp \rightarrow E/p^n\Zp$ is minimal
 for any $n\geq 1$ (see \cite[p.\,111]{Ana94}, \cite[Theorem 1.2]{Ana06} and \cite[Corollary 4]{CFF09}). Then by Proposition  \ref{minimalphi}, a criterion of the minimality of the system $\varphi$ 
on $p\Zp$ can be obtained.

\begin{corollary}\label{mininduce}
The dynamical system $(p\Zp,\varphi)$ is minimal if and only if\\
\indent {\rm (1)} the finite system $(p\Zp/p^3\Zp,\varphi_3)$  is minimal for $p=2$ or $3$;\\
\indent {\rm (2)} the finite system $(p\Zp/p^2\Zp,\varphi_2)$ is minimal for $p\geq 5$.
\end{corollary}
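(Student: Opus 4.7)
The plan is to combine the general criterion stated just before the corollary---that $(p\Zp,\varphi)$ is minimal if and only if each induced map $\varphi_n$ is minimal on $p\Zp/p^n\Zp$---with the propagation Proposition \ref{minimalphi}. The \emph{only if} direction is immediate from that criterion, so the content lies in the \emph{if} direction. For this I would show that minimality of $\varphi_n$ at the hypothesised finite level forces every intermediate level to carry a single growing cycle, and then invoke Proposition \ref{minimalphi} to propagate the growth upwards to all higher levels.

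The key preliminary observation is a cardinality argument. The set $p\Zp/p^n\Zp$ has $p^{n-1}$ elements, so minimality of $\varphi_n$ means this set forms a single cycle of length $p^{n-1}$. For each intermediate $1\leq m<n$, the induced map $\varphi_m$ must likewise consist of a single cycle of length $p^{m-1}$, because the cycles of $\varphi_{m+1}$ lying in $X_\sigma$ are precisely the lifts of the level-$m$ cycle $\sigma$, and a single full-length lift of length $p\cdot p^{m-1}$ arises in only one of the four behaviours (a)--(d) listed before Proposition \ref{minimalphi}, namely behaviour (a), growth. Hence every cycle at every level $m<n$ is a growing cycle.

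Now the induction branches by $p$. For $p\geq 5$, hypothesis (2) implies that the level-$1$ fixed point is a growing cycle, and Proposition \ref{minimalphi}(1), applied at each $m\geq 1$, shows that the unique lift at every subsequent level is again a growing cycle, hence itself a single full-length cycle. For $p=3$, hypothesis (1) implies that the level-$2$ cycle of length $3$ is growing, and since $m=2\geq 2$ Proposition \ref{minimalphi}(2) applies from level $2$ upwards. For $p=2$, hypothesis (1) gives two consecutive growing cycles at levels $1$ and $2$, which is exactly the input required to activate Proposition \ref{minimalphi}(3); that clause then propagates growth one level at a time. In each case $\varphi_n$ is minimal for every $n\geq 1$, and the general criterion yields minimality of $\varphi$ on $p\Zp$.

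The only real subtlety is the cardinality/classification step that rules out behaviours (b)--(d) at every intermediate level; this is what makes the thresholds $n=3$ for $p=2,3$ and $n=2$ for $p\geq 5$ sharp. They are precisely the smallest levels producing enough successive growing cycles to trigger the appropriate clause of Proposition \ref{minimalphi}, and no smaller level suffices, since parts (2) and (3) of that proposition fail to give propagation from lower levels in general.
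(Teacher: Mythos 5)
Your proposal is correct and follows exactly the route the paper intends: the general criterion that minimality is equivalent to minimality of every $\varphi_n$, the observation that a single full-length cycle at level $n$ forces behaviour (a) (growth) at all lower levels since splitting, partial splitting and growing tails each produce more than one lift or a shorter one, and then propagation upward via the appropriate clause of Proposition \ref{minimalphi} for each of the three cases $p\geq 5$, $p=3$, $p=2$. The paper leaves these details implicit, and your write-up supplies them accurately, including the point that the $p=2$ clause needs two consecutive growing levels as input.
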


As we investigate the induced dynamical systems level by level, the possible periods of $\varphi$ on $p\Z_p$ can also be obtained, as a consequence of Proposition \ref{minimalphi}.
\begin{corollary}\label{periodiclength}
Let $\varphi(z)=\sum_{i=0}^{\infty}\lambda_i z^i \in \Zp[[z]]$ with $\lambda_0 \in p\Zp$ and consider the dynamical system $(p\Zp,\varphi)$.\\
\indent { \rm (1)} If $p >3$, the period of a periodic orbit must be a factor of $p-1$.
 \\
\indent { \rm  (2)} If $p=3$, the period of a periodic orbit must be $1, 2$ or $3$. 

\indent { \rm (3)} If $p=2$, the period of a periodic orbit must be $1$ or $2$.
\end{corollary}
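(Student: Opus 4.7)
My plan is to track, level by level, the possible lengths of cycles of the induced dynamics on $p\Zp/p^n\Zp$, and to determine which lengths can be \emph{stabilised}, i.e.\ remain constant from some level onwards. Since a periodic orbit of $\varphi$ of period $m$ corresponds, for every sufficiently large $n$, to a cycle of $\varphi_n$ of length exactly $m$, this enumeration will pin down the allowed values of $m$. At level $n=1$ there is only one cycle, namely $\{0\}$, of length $1$ (because $p\Zp/p\Zp=\{0\}$), so the entire analysis starts from a length-$1$ cycle.

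From the classification (a)--(d) recalled before Proposition~\ref{minimalphi}, moving from level $n$ to level $n+1$ can change the length of a given cycle only in two ways: multiply it by $p$ (case (a), grow) or by some $\ell\mid p-1$ (the long branch of case (d), partial split); cases (b), (c), and the short branch of (d) preserve the length. The first step of the proof is the short verification that after either a growing step or the long branch of a partial split, the lifted cycle $\tilde\sigma$ satisfies $\alpha_{n+1}\equiv 1\pmod p$, because by the chain-rule identity $\alpha_{n+1}$ equals $\alpha_n^{p}$ or $\alpha_n^{\ell}$, each of which is $1$ in $\mathbb{F}_p$ in the respective case. Hence $\tilde\sigma$ then falls into case (a) or (b) only, so cases (c) and (d) cannot be applied to $\tilde\sigma$: after the first length increase, no further \emph{partial} split can occur, only further grows or splits.

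With this in hand, the three parts of the corollary follow by applying the three parts of Proposition~\ref{minimalphi}. For $p>3$, part~(1) says one growing step at any level $n\geq 1$ triggers indefinite growing, so $p$ cannot divide $m$; combined with the previous paragraph, a single partial-split step is then the only mechanism to enlarge the length, giving $m=\ell\mid p-1$. For $p=3$, part~(2) permits at most one grow, and only at level~$1$, yielding stable length $3$, while a single partial split produces $\ell=2$, yielding stable length $2$; the two cannot be combined without triggering a growing step at some level $\geq 2$ and hence perpetual growth, so $m\in\{1,2,3\}$. For $p=2$, partial splitting is vacuous ($\ell\mid 1$), and part~(3) shows that two consecutive grows force growing forever; thus at most one grow is allowed in the whole tower, and $m\in\{1,2\}$.

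The main obstacle to making this fully rigorous is the compatibility identity $\alpha_{n+1}\equiv 1\pmod p$ after a grow or after the long branch of a partial split, together with the analogous control on $\beta_{n+1}$ needed to conclude that the resulting cycle must be in case (a) or (b). These computations are carried out in detail in \cite{FL11,FLpre} for arbitrary convergent power series in $\Zp[[z]]$, which covers the present $\varphi$; invoking them closes the argument.
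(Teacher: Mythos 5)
Your overall strategy --- tracking the cycle containing a given periodic point level by level, noting that its length can only be multiplied by $p$ (a grow) or by some $\ell\mid p-1$ (a long branch of a partial split), and that after either of these the cycle is locked into the grow/split cases --- is exactly how the paper derives this corollary from the two quoted propositions, and your treatment of parts (1) and (2) is correct: for $p>3$ any grow at any level is perpetual, and for $p=3$ any grow at a level $n\geq 2$ is perpetual, so the stable lengths are $\ell\mid p-1$, respectively $1,2,3$.

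The gap is in part (3). Proposition \ref{minimalphi}(3) only rules out two \emph{consecutive} grows followed by stabilization; your inference ``thus at most one grow is allowed in the whole tower'' does not follow from it. The pattern ``grow at level $n_1$, split at levels $n_1+1,\dots,n_2-1$, grow at level $n_2$, split forever'' is not excluded by anything you have quoted, and it would yield a periodic orbit of period $4$. Excluding it requires the finer information behind Proposition \ref{minimalphi}(3) rather than its statement: after a grow, the lifted $2k$-cycle has multiplier $\a_{n+1}\equiv 1 \ ({\rm mod}\ 4)$ (for the first grow this is the computation $(\varphi^{2})^{\prime}(x_0)=\varphi^{\prime}(x_0)\varphi^{\prime}(\varphi(x_0))\equiv \varphi^{\prime}(x_0)^2\equiv 1 \ ({\rm mod}\ 4)$, using that $\varphi^{\prime\prime}$ takes values in $2\Z_2$ and $\varphi(x_0)-x_0\in 2\Z_2$); this congruence is preserved by splitting; and a growing cycle with $\a_n\equiv 1\ ({\rm mod}\ 4)$ has a growing lift and hence grows forever. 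With this supplement the second grow at level $n_2$ is automatically perpetual, and period $4$ is impossible. Note also that the ``main obstacle'' you flag in your last paragraph (that $\a_{n+1}\equiv 1 \ ({\rm mod}\ p)$ forces cases (a)/(b) after a length increase) is already guaranteed by the first quoted proposition from \cite{FL11}; the genuinely missing ingredient is the mod-$4$ control of $\a_n$ for $p=2$ just described, which comes from the proofs in \cite{FL11,FLpre}, not from the statements reproduced in the paper.
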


Furthermore, the dynamical  structure of $\varphi$ on $p\Z_p$ is fully illustrated by  the following minimal decomposition.
\begin{theorem}[\cite{FLpre}, Theorem 1.1]\label{Zpdeco}
Let $\varphi(z)=\sum_{i=0}^{\infty}\lambda_i z^i \in \Zp[[z]]$ with $\lambda_0 \in p\Zp$. Suppose  $\varphi^n\neq \mbox{id}$ for all $n\geq 1$.
We have the following decomposition
$$p\Zp=\mathcal{P}\bigsqcup \mathcal{M}\bigsqcup \mathcal{B},$$ where $\mathcal{P}$ is the finite set consisting of all periodic points of
$\varphi$, $\mathcal{M}= \bigsqcup_i \mathcal{M}_i$ is the union of all (at most countably
many) clopen invariant sets such that each $\mathcal{M}_i$ is a finite union
of balls and each subsystem $\varphi: \mathcal{M}_i\to \mathcal{M}_i$ is minimal, and 
the points in $ \mathcal{B}$ lie in the attracting basin of $\mathcal{P}\bigsqcup \mathcal{M}$.
\end{theorem}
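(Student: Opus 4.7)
The plan is to analyze the induced finite dynamics $(p\Zp/p^n\Zp, \varphi_n)$ for every $n \geq 1$ and assemble a limit decomposition from them. At each level the finite state space decomposes as a disjoint union of cycles of $\varphi_n$ together with pre-periodic tails, and each cycle $\sigma$ of length $k$ at level $n$ falls into one of the four classes (a)--(d) recalled above, which predicts how the cycles of $\varphi_{n+1}$ sit inside $X_{\sigma}$. This gives a forest-like structure on the collection of all cycles at all levels, whose edges are the lifts produced by Proposition~\ref{minimalphi} and its surrounding classification.

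I would define the three sets of the decomposition by tracking infinite branches in this forest. Take $\mathcal{P}$ to be the set of $x \in p\Zp$ whose reduction $x \bmod p^n$ lies in a cycle of $\varphi_n$ for every $n$ and whose cycle-length stabilises; equivalently, the cycle containing $x \bmod p^n$ eventually always \emph{splits}. Such an $x$ is genuinely periodic, and by Corollary~\ref{periodiclength} the possible periods are uniformly bounded. For each admissible period $k$, the equation $\varphi^k(z)=z$ is analytic and, by the hypothesis $\varphi^n \neq \mathrm{id}$, is nonzero on $p\Zp$, so its zero set is finite; hence $\mathcal{P}$ is finite. Take $\mathcal{M}$ to be the union, over all branches that eventually \emph{grow} at every level, of the nested intersection $\bigcap_{n} \mathbb{X}_{\sigma_n}$; each resulting $\mathcal{M}_i$ is a finite union of balls of radius $p^{-n}$ at every level $n$, hence clopen and $\varphi$-invariant. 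Along such a branch the lift lengths are $k, kp, kp^2, \ldots$, so the action of $\varphi$ on $\mathcal{M}_i$ is conjugate to the adding machine on $\Z_{(k, kp, kp^2, \ldots)}$ and in particular minimal. Finally let $\mathcal{B}$ be the complement.

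Proposition~\ref{minimalphi} is the key input which guarantees that these three types exhaust all possibilities. Once the growing behaviour has appeared for enough consecutive levels (one for $p \geq 5$, from level two onward for $p=3$, and after two consecutive growths for $p=2$), it persists at every higher level. Thus every cycle of $\varphi_n$ that has grown for a sufficient number of levels is the root of a well-defined infinite growing branch, and each trajectory of cycles stabilises into exactly one of three regimes: always splits (contributing points to $\mathcal{P}$), always grows (contributing to a single $\mathcal{M}_i$), or feeds into a shorter cycle via a grows-tails or partially-splits step (contributing to $\mathcal{B}$). The family $\{\mathcal{M}_i\}$ is at most countable because at each finite level there are only finitely many cycles, and the decomposition $\P \supset p\Zp = \mathcal{P} \sqcup \mathcal{M} \sqcup \mathcal{B}$ is disjoint by construction.

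The main obstacle will be upgrading this combinatorial picture on $\mathcal{B}$ into genuine topological attraction. The classification only shows that a point outside $\mathcal{P} \cup \mathcal{M}$ eventually, at every sufficiently large level, lies in the transient portion of a grows-tails or partially-splits cycle, so that its level-$n$ reduction lands in a strictly shorter cycle after finitely many iterations of $\varphi_n$. To convert this into metric attraction one must show that the distance from the orbit to the target cycle actually tends to zero; this will use the $1$-Lipschitz property of $\varphi$ together with the fact that the transient region of a grows-tails or partially-splits configuration is strictly contracted modulo $p^{n+1}$ by the appropriate power of $\varphi$. Combined with the compactness of $p\Zp$, this forces $\rho(\varphi^{km}(x), \sigma) \to 0$ as $m \to \infty$, identifying the attracting target of each $x \in \mathcal{B}$ as either a periodic orbit in $\mathcal{P}$ (when the target cycle eventually splits forever) or a minimal component $\mathcal{M}_i$ (when it eventually always grows).
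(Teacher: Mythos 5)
First, note that the paper offers no proof of this statement: it is quoted verbatim from \cite{FLpre} (Theorem 1.1 there), so there is no internal argument to compare yours against; what you have written is a reconstruction of the strategy of \cite{FL11,FLpre}, and the overall architecture (classify the cycles of $\varphi_n$ level by level, follow branches in the resulting forest, let stabilizing lengths give $\mathcal{P}$, eventually-always-growing branches give $\mathcal{M}$, and everything else fall into $\mathcal{B}$) is the right one. Your finiteness argument for $\mathcal{P}$ (bounded periods via Corollary \ref{periodiclength} plus finitely many zeros of the nonzero analytic function $\varphi^k(z)-z$ on the compact set $p\Zp$) and your identification of an eventually-growing branch with a clopen minimal component carrying an adding machine are both correct.

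The genuine gap is the exhaustiveness of your trichotomy. You assert that every branch of cycles ``stabilises into exactly one of three regimes,'' but the propositions you cite do not prove this. The problematic case is a branch $(\sigma_n)$ along which the cycle length tends to infinity while growth does \emph{not} eventually occur at every level. For $p\ge 5$ this cannot happen (Proposition \ref{minimalphi}(1): one growth forces growth forever), and for $p=3$ it is excluded from level $2$ on; but for $p=2$ Proposition \ref{minimalphi}(3) is conditional --- it only fires after two \emph{consecutive} growths --- so nothing you quote rules out a branch with pattern grow, split, grow, split, $\dots$ forever. Such a branch would produce a nonempty compact $\varphi$-invariant set $\bigcap_n\mathbb{X}_{\sigma_n}$ on which $\varphi$ is minimal but which is not a finite union of balls and is not attracted to $\mathcal{P}\sqcup\mathcal{M}$; its existence would falsify the theorem as stated, so ruling it out is not optional. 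Doing so is precisely the technical core of the $p=2$ and low-level $p=3$ analysis in \cite{FL11,FLpre} and requires the explicit recursion formulas relating $(\a_n,\b_n)$ to $(\a_{n+1},\b_{n+1})$ (e.g.\ \cite[Lemma 2]{FL11}), which your argument never invokes. Two smaller points: the parenthetical ``equivalently, the cycle containing $x \bmod p^n$ eventually always splits'' is false --- the surviving fixed-length cycle of a forever grows-tails or forever partially-splits branch also consists of periodic points --- although your primary definition of $\mathcal{P}$ via stabilizing cycle length is correct; and once the trichotomy is secured, the attraction statement for $\mathcal{B}$ follows more simply than you suggest, since for each $n$ the orbit of $x\bmod p^n$ enters a unique terminal cycle $\sigma_n$, these cycles form a branch, and $\varphi^m(x)\in\mathbb{X}_{\sigma_n}$ for all large $m$, so $x$ is attracted to $\bigcap_n\mathbb{X}_{\sigma_n}$ with no contraction estimate needed.
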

 The sets $\mathcal{M}_i$
in the above decomposition are called {\em minimal components}.
To completely characterize the dynamical system $(p\Z_p,\varphi)$, each minimal component is described as the adding machine on an odometer  by  giving the structure sequence  of the odometer.
\begin{theorem}[\cite{FLpre}, Theorem 1.1]\label{structure1}
Let $\varphi(z)=\sum_{i=0}^{\infty}\lambda_i z^i \in \Zp[[z]]$ with $\lambda_0 \in p\Zp$.
 If $E\subset p\Z_p$ is a minimal clopen invariant set of $\varphi$,
then $\varphi : E \to E$ is conjugate to the adding machine on the odometer
$\mathbb{Z}_{(p_s)}$, where  $$(p_s) = ( \ell,  \ell p, \ell p^2,
\cdots)$$ where $\ell \geq 1$ is an integer dividing  $p-1$.
\end{theorem}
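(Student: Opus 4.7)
The plan is to analyze the induced dynamics of $\varphi$ on the finite quotients $p\Zp/p^n\Zp$ for increasing $n$ and track the length of the unique cycle that covers $E$ at each level. Since $E$ is a minimal clopen invariant set, the analogue of Proposition \ref{prop} for $p\Zp$ tells us that the induced map $\varphi_n$ acts minimally on $E/p^n\Zp$, which for a finite set forces $E/p^n\Zp$ to be a single $\varphi_n$-cycle. Denote its length by $p_n$. The whole task reduces to determining the sequence $(p_n)$ and then building the conjugation with the adding machine.

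First I would handle the initial levels. Because $E \subset p\Zp$, the projection $E/p\Zp$ consists of the single point $\bar 0$, so $p_1 = 1$ and $\sigma_1 = \{\bar 0\}$ is a fixed point of $\varphi_1$. Its lift to level $2$ must follow one of the four behaviours grows, splits, grows tails, partially splits. If $\sigma_1$ grows tails the resulting system admits non-periodic points mapped into the cycle, contradicting minimality. If $\sigma_1$ splits then every lift has length $1$ and the component $E$ collapses to a single fixed point, not a minimal component. The remaining cases are: $\sigma_1$ grows, producing a length-$p$ cycle at level $2$; or $\sigma_1$ partially splits, producing cycles of length $\ell$ for some $\ell$ dividing $p-1$ equal to the order of $\a_1 = \varphi'(0) \pmod p$ in $\mathbb F_p^{*}$. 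In either situation the relevant cycle at level $2$ has length $\ell$ with $\ell \mid (p-1)$ (taking $\ell = p$ to be subsumed by the case $\ell = 1$ followed by a single growth step, which is how the sequence $(\ell, \ell p, \ell p^2, \ldots)$ with $\ell = 1$ arises).

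Next I would iterate. Once a cycle at some level has grown, the classification in the partially-splits clause of the first proposition of Section \ref{induc} says that every lift of a length-$\ell$ cycle appearing as the ``other" lift of a partially splitting cycle either grows or splits. Splitting at any higher level again produces length-$\ell$ cycles whose union fails to be minimal, so the lift must grow; its length becomes $\ell p$ at level $3$. From level $3$ onward, Proposition \ref{minimalphi} applies: for $p \geq 5$ every growing cycle lifts to a growing cycle, for $p = 3$ growth persists from level $n \geq 2$, and for $p = 2$ once a grown cycle has itself grown (which happens by level $3$) all subsequent lifts grow. Hence $p_{n+1} = p \cdot p_n$ for all $n \geq 2$, yielding $p_n = \ell p^{n-2}$ for $n \geq 2$, matching the structure sequence $(\ell, \ell p, \ell p^2, \ldots)$.

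Finally I would construct the conjugation. For each $n \geq 2$ write $E/p^n\Zp = \{y_0^{(n)}, y_1^{(n)}, \ldots, y_{p_n-1}^{(n)}\}$ with $\varphi_n(y_i^{(n)}) = y_{i+1 \!\!\mod p_n}^{(n)}$, chosen coherently so that the ball $y_i^{(n+1)} + p^{n+1}\Zp$ is contained in $y_{i \!\!\mod p_n}^{(n)} + p^n\Zp$. This produces a homeomorphism $h : E \to \varprojlim \mathbb Z/p_n\mathbb Z = \Z_{(p_s)}$ sending a point $x \in E$ to the compatible sequence of its cycle indices, and by construction $h \circ \varphi = T \circ h$ where $T$ is the adding machine $z \mapsto z+1$. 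The main obstacle is the careful case analysis at the small primes $p = 2$ and $p = 3$, where Proposition \ref{minimalphi} does not guarantee growth after a single growing step; there one must verify that after the initial partial split and one subsequent growth the hypotheses of the inductive step are met, so the formula $p_n = \ell p^{n-2}$ indeed holds from level $2$ on, with no anomalous level where the cycle length could stagnate or double-split.
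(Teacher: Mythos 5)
The paper itself does not prove this statement --- it is imported verbatim from [FLpre, Theorem 1.1] --- so the only comparison available is with the strategy of that reference, which is indeed the level-by-level cycle analysis you follow. Your overall architecture (identify $E/p^n\Zp$ with a single $\varphi_n$-cycle, classify the transitions grows/splits/grows-tails/partially-splits, read off the structure sequence, and then build the conjugacy from coherent cycle labelings) is the right one, and your exclusion of the grows-tails case and the final construction of $h$ are fine.

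There is, however, a genuine error in how you dispose of the \emph{splits} case, and it occurs twice. First, ``if $\sigma_1$ splits then $E$ collapses to a single fixed point'' is false: a split at level $1$ only says that $E/p^2\Zp$ is a singleton, and the cycle covering $E$ at level $2$ may perfectly well grow afterwards (e.g.\ $\varphi(z)=z+p^2$ on $p\Zp$, whose minimal components are the balls $x+p^2\Zp$, conjugate to $\mathbb{Z}_{(1,p,p^2,\dots)}$). Second, ``splitting at any higher level produces length-$\ell$ cycles whose union fails to be minimal, so the lift must grow'' confuses $E$ with $X_\sigma$: when the covering cycle splits, $E$ is simply contained in one of the $p$ resulting sub-cycles, and the minimality of $E$ is not contradicted. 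Consequently your claim that $p_{n+1}=p\cdot p_n$ for all $n\geq 2$, i.e.\ that $p_n=\ell p^{n-2}$ holds exactly from level $2$ on, is not justified; for $p=2,3$ it can genuinely fail at low levels, since Proposition \ref{minimalphi} does not forbid a grow-then-split there. The correct argument is: since $E$ is clopen it is a finite union of balls of radius $p^{-N}$ for some $N$, hence for every $n\geq N$ the covering cycle must grow; the finitely many splits (and the at most one level at which the $\ell$-fold partial split is realized) all occur at levels below $N$ and only insert an initial string of $1$'s and repeated terms into the sequence of cycle lengths, giving $(1,\dots,1,\ell,\dots,\ell,\ell p,\ell p^2,\dots)$, whose inverse limit is the same odometer $\mathbb{Z}_{(\ell,\ell p,\ell p^2,\dots)}$. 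With that repair your proof goes through.
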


\medskip

\section{Proof of Theorems \ref{minimal}--\ref{structure-minimal}}\label{pro}
\begin{proof}[Proof of Theorem \ref{minimal}]
We begin with  proving  the ``\,if\," part of the theorem. The reduction $\overline{\phi}$ is minimal  on  $\mathbb{P}^{1}(\mathbb{F}_p)$ which implies that
$\phi_1$  is minimal on $\mathfrak{B}_1$. Let $\psi=\phi^{p+1}$ be the $p+1$-th iteration of $\phi$. Then the ball $B_{1}(0)=p\Zp$ is an invariant set of $\psi$.
 By the argument of Section \ref{induc}, $\psi$ can be written as a  power series with coefficients in $\Zp$ which converges on $p\Zp$,  i.e.
$$\psi(z)=\lambda_0+ \lambda_1 z+ \lambda_2 z^2+ \lambda_3z^3+\cdots $$
with $\lambda_i \in \Zp$ for all $i\geq 1$ and $\lambda_0\in p\Zp$. Noting that $\phi$ is $1$-Lipschitz,  $\psi$ is also $1$-Lipschitz. 
 The minimality of the dynamical  system $(p\Zp, \psi)$ leads to   $\psi$ is isometric on $p\Z_p$, see \cite{Anashin-Khrennikov-AAD,CFF09}. So $\phi$ is isometric on $\P$ with respect to the spherical metric. Hence, $\psi$ is minimal on each ball of radius $1/p$ which 
  implies the minimality of  $(\P,\phi)$. It  suffices to show that
the dynamical  system $(p\Zp, \psi)$ is minimal.

For $n=1$, the cycle $(0)$ is the unique cycle of $\psi_1$. The condition $(\phi^{(p+1)})^{\prime}(0)\equiv1 \ ({\rm mod} \  p)$  implies that the cycle $(0)$ grows or splits.  By the condition $|\phi^{(p+1)}(0)|_p= 1/p$, the cycle $(0)$ grows which then means that the dynamical system $(p\Zp/p^2\Zp,\psi_2)$ is minimal.

For the cases $p>3$,  it follows from Corollary \ref{mininduce} that   the dynamical system $(p\Zp,\psi)$ is minimal.

Note that $(\phi^{p(p+1)})^{\prime}(0)\equiv (\phi^{(p+1)})^{\prime}(0)\equiv 1 \ ({\rm mod} \  \ p)$.
For the cases $p=2 $ or $3$, the additional condition implies that the unique lift of $(0)$ at level $2$ grows.
Thus the system $(p\Zp/p^3\Zp,\psi_n)$ is minimal. By Corollary \ref{mininduce},   the dynamical system $(p\Zp,\psi)$ is minimal.

\smallskip

Now we prove the ``\,only if\," part. Let $\phi$ be a minimal rational map with good reduction. By Proposition \ref{prop}, the system $(\mathfrak{B}_n,\phi_n)$ is minimal for all integers $n\geq 1$. The minimality of system $(\mathfrak{B}_1,\phi_1)$ implies that the reduction $\overline{\phi}$ is minimal on  $\mathbb{P}^{1}(\mathbb{F}_p)$, while the minimality of system $(\mathfrak{B}_2,\phi_2)$ implies that the cycle $(0)$ of $\psi_1=\phi^{p+1}_1$ grows. Hence we obtain  $(\phi^{(p+1)})^{\prime}(0)\equiv 1 \ ({\rm mod} \  \ p)$ and $|\phi^{(p+1)}(0)-0|_p= 1/p$.  Furthermore, by the minimality of system $(\mathfrak{B}_3,\phi_3)$, the unique lift of $(0)$ of $\phi^{(p+1)}$  at level $2$ grows. So we have $|\phi^{ p(p+1)}(0)|_p= 1/p^2$.
\end{proof}
In the above proof of Theorem \ref{minimal}, we have indeed  established the following result.

\begin{theorem}\label{minifinite}
 Let $\phi:\P\mapsto \P$ be a rational map of $\deg \phi \geq 2$ with good reduction. 
Then the dynamical system $(\P,\phi)$ is minimal if and only if  the following condition is satisfied:
\begin{itemize}
  \item[(1)]  the system $(\mathfrak{B}_3, \phi_3)$ is minimal for $p=2$ or $3$,
  \item[(2)] the system $(\mathfrak{B}_2, \phi_2)$ is minimal for $p\geq 5$.
\end{itemize}
\end{theorem}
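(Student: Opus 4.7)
The plan is to recognize that this theorem is essentially a repackaging of the arguments already carried out in the proof of Theorem \ref{minimal}, together with the equivalence given by Proposition \ref{prop}. Since passing from level $n$ to any coarser level is a surjective equivariant factor map, minimality of $(\mathfrak{B}_3, \phi_3)$ (resp.\ $(\mathfrak{B}_2, \phi_2)$) forces minimality at every level $\le 3$ (resp.\ $\le 2$), and in particular $(\mathfrak{B}_1,\phi_1)$ is a single $(p+1)$-cycle on $\mathbb{P}^{1}(\mathbb{F}_p)$. The ``only if'' direction is immediate: if $(\P,\phi)$ is minimal, Proposition \ref{prop} gives minimality of $(\mathfrak{B}_n,\phi_n)$ for every $n\ge 1$, so in particular for $n=3$ or $n=2$.

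For the ``if'' direction I would reduce the global question to a question about a power series on $p\Zp$, exactly as in Section \ref{induc}. Choose a Möbius transformation $f\in\mathrm{PGL}_2(\Zp)$ carrying one ball of the $(p+1)$-cycle to $B_1(0)=p\Zp$; replacing $\phi$ by $f\circ\phi\circ f^{-1}$ does not affect minimality (it is a spherical isometry). Then $\psi:=\phi^{p+1}$ has good reduction, preserves $p\Zp$, and by Lemma \ref{lem-1-Lip} its restriction to $p\Zp$ can be expanded as a convergent series $\psi(z)=\lambda_0+\lambda_1 z+\lambda_2 z^2+\cdots$ with $\lambda_i\in\Zp$ and $\lambda_0\in p\Zp$, so Corollary \ref{mininduce} applies to $\psi$.

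The bridge between $\phi$ and $\psi$ at each finite level is the following correspondence. Since $\phi_1$ cyclically permutes the $p+1$ balls of $\mathfrak{B}_1$, each iterate $\phi^{p+1}$ fixes every $B_1(i)$ setwise, and $(\mathfrak{B}_{n+1},\phi_{n+1})$ is minimal if and only if $(\psi_{n+1}, p\Zp/p^{n+1}\Zp)$ is minimal, where $\psi_{n+1}$ denotes the induced map of $\psi$ on $p\Zp/p^{n+1}\Zp$ (the other restrictions $\psi|_{B_1(i)}$ being conjugate to $\psi|_{p\Zp}$ via iterates of $\phi$). Under this correspondence, the finite-level hypotheses of the theorem translate verbatim into the hypotheses of Corollary \ref{mininduce} applied to $\psi$: for $p\ge 5$, minimality of $(\mathfrak{B}_2,\phi_2)$ corresponds to minimality of $(p\Zp/p^2\Zp,\psi_2)$; for $p=2,3$, minimality of $(\mathfrak{B}_3,\phi_3)$ corresponds to minimality of $(p\Zp/p^3\Zp,\psi_3)$. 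Corollary \ref{mininduce} then yields that $(p\Zp,\psi)$ is minimal, hence $(p\Zp/p^n\Zp,\psi_n)$ is minimal for every $n\ge 1$, whence $(\mathfrak{B}_n,\phi_n)$ is minimal for every $n\ge 1$, and a final application of Proposition \ref{prop} gives the minimality of $(\P,\phi)$.

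The only delicate point, which is where I would be most careful, is the identification of the finite quotients of $B_1(0)$ with the tree levels of $\P$ lying above $B_1(0)$: balls of radius $p^{-m}$ inside $p\Zp$ are precisely cosets of $p^m\Zp$, so level $m$ of $\P$ restricted to $B_1(0)$ is $p\Zp/p^m\Zp$, and the induced action of $\psi$ on this quotient is what Corollary \ref{mininduce} names $\psi_m$. Once this bookkeeping is in place the proof reduces to chaining together Proposition \ref{prop}, the correspondence with $\psi$, and Corollary \ref{mininduce}, with no further computation needed.
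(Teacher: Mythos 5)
Your proof is correct and follows essentially the same route as the paper: the theorem is extracted from the proof of Theorem \ref{minimal}, reducing to the power series $\psi=\phi^{p+1}$ on $p\Zp$ via Lemma \ref{lem-1-Lip} and invoking Corollary \ref{mininduce}. The only (harmless) difference is in the final globalization step, where you chain the finite-level correspondence with Proposition \ref{prop}, whereas the paper deduces from the minimality of $(p\Zp,\psi)$ that $\phi$ is isometric and hence minimal on each ball of the level-one cycle.
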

\begin{proof}[Proof of Theorem \ref{minmaldecomp}]
The space $\P$ is a union of $p+1$
balls with radius $p^{-1}$. Each ball  can be identified with a point in
$\PF$. The reduction map $\overline{\phi}$ on $\PF$ admits some
cycles. By iteration of $\overline{\phi}$, the points outside the cycles are attracted by the cycles. The
ball corresponding to such a point  will be put into the third part
$\mathcal{B}$ in the minimal  decomposition.

Let $(\overline{x}_1,\cdots,\overline{x}_k)\subset \PF$ be a cycle of $\overline{\phi}$. Without loss of generality,
we assume  $\overline{x}_1=0$. Let $\psi=\phi^{ k}$ be the $k$-th iteration of $\phi$. Then  the ball $B_{1}(0)=p\Zp$ is an invariant set of $\psi$. Noting  that $\phi$ is a rational map of $\deg \phi \geq 2$, $\phi^n$ is a rational map of degree $(\deg \phi)^n$. So $\phi^n\neq id$ for all $n\geq 1$.
By Theorem \ref{Zpdeco}, the dynamical system $(p\Zp,\psi)$ has a minimal decomposition which then gives a  minimal decomposition of the system $(\bigsqcup_{i=1}^{k} B_1(x_i),\phi)$.

Applying  the same argument  to all the cycles of $\overline{\phi}$,  we obtain  the minimal decomposition of the whole system $(\P,\phi)$.

Moreover, by Corollary \ref{periodiclength}, the possible lengths of periodic orbits are obtained.  
\end{proof}

\begin{proof}[Proof of Theorem \ref{structure-minimal}]
Let $k$  denote the length  of the induced periodic orbit  of $\phi$ on  the minimal set $E$  at  the first level. Each  point at this first level
corresponds to a point in $\mathbb{P}^{1}(\mathbb{F}_p)$.  So $k\leq p+1$.

Without loss of generality, we assume that $0\in E$. The set $B_1(0)\cap E$ is invariant by $\phi^k$. Consider the dynamical system $(B_1(0)\cap E, \phi^k )$.
Noting $\phi$ have good reduction, by (\ref{inte-coe})  and Theorem \ref{structure1}, we deduce  that 
 $\phi^k : B_1(0)\cap E \to B_1(0)\cap E$ is conjugate to the adding machine on the odometer
$\mathbb{Z}_{(p_s)}$, where  $$(p_s) = (\ell, \ell p, \ell p^2,\cdots)$$ with $\ell \mid (p-1)$,
which implies $\phi : E \to E$ is conjugate to the adding machine on the odometer
$\mathbb{Z}_{(p_s)}$, where  $$(p_s) = (k, k\ell, k \ell p, k \ell p^2,\cdots).$$
\end{proof}

\medskip

\section{Minimal rational map for the case $p=2$}\label{pequal2}
\subsection{Minimal Conditions}

Let $$\phi(z)=\frac{a_0+a_1z+\cdots +a_{d-1}z^{d-1}+a_d z^d}{b_1z+\cdots + b_{d-1}z^{d-1}+ b_d z^d}$$
be a rational map of  degree $d\geq 2$ with $a_i,b_j \in \Q_2$  and $a_d=b_d=1$. Recall that 
 $A_\phi=\sum_{i\geq 0}a_i$,
$B_\phi=\sum_{j\geq 1}b_j$,  $A_{\phi,1}=\sum_{i\geq 0}a_{2i+1}$, $A_{\phi,2}=\sum_{i\geq 0}a_{4i+1}$ and $A_{\phi,3}=\sum_{i\geq 0}a_{4i+3}$.

 Since $\phi(0)=\infty$, for the convenience of calculation,  we shall select a suitable coordinate.  Let 
\begin{equation}\label{changecor1}
\psi(z):=\frac{1}{\phi(z)}=\frac{b_1z+\cdots + b_{d-1}z^{d-1}+ b_d z^d}{a_0+a_1z+\cdots +a_{d-1}z^{d-1}+a_d z^d}
\end{equation}
and 
\begin{equation}\label{changecor2}
\varphi(z):=\phi(\frac{1}{z})=\frac{a_d +a_{d-1}z +\cdots +a_1z^{d-1}+ a_0z^d}{ b_d+b_{d-1}z+ \cdots +  b_1z^{d-1}}.
\end{equation} 
Then we have $\phi^{3}=\phi\circ \varphi\circ \psi$.
\begin{lemma}\label{mod1}
If  $\phi$ has good reduction  and  $(\mathfrak{B}_1,\phi_1)$ is minimal, then 
\begin{equation}\label{conditionmod1}
\left\{
   \begin{array}{ll}
     a_i,b_j \in \Z_2, \hbox{~~~~~~~for $0\leq i \leq d-1$ and $1\leq j \leq d-1$},\\
     a_{0}\equiv 1 \ ({\rm mod}   \ 2),  \\
     A_\phi \equiv 0 \ ({\rm mod}   \ 2),\\
     B_\phi \equiv 1 \ ({\rm mod}   \ 2). \\
   \end{array}
 \right.
\end{equation}
Conversely, the condition {\rm (\ref{conditionmod1})} implies that   $\phi$ is $1$-Lipschitz continuous  and  $(\mathfrak{B}_1,\phi_1)$ is minimal. Moreover, the Taylor expansion of $\phi^3$  at $0$ is of the form
$$\phi^3(z)=\phi(1)+\lambda z+ \lambda_2 z^2+ \lambda_3z^3+\cdots,
$$
where $\lambda,\lambda_2,\lambda_3\cdots \in \Z_2 $. 
\end{lemma}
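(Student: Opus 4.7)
\emph{Strategy.} I would work ball-by-ball on the decomposition $\mathbb{P}^1(\mathbb{Q}_2) = B_1(0) \sqcup B_1(1) \sqcup B_1(\infty)$, using the auxiliary rational functions $\psi$ and $\varphi$ defined in (\ref{changecor1})--(\ref{changecor2}) as local coordinate charts. Each $B_1(a)$ is isometric to the ball $2\mathbb{Z}_2$ under the spherical metric, via the local coordinates $z$ on $B_1(0)$, $u = z-1$ on $B_1(1)$, and $w = 1/z$ on $B_1(\infty)$.

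\emph{Forward direction.} Assume $\phi$ has good reduction and $\phi_1$ is minimal. The normalization $a_d = b_d = 1$ forces $v_2(a_i), v_2(b_j) \ge 0$: otherwise, rescaling $(f,g)$ to an integral form with at least one unit coefficient would push both leading terms into $2\mathbb{Z}_2$, dropping $\deg\overline{\phi}$ below $d$. Since $g$ has no constant term, $\overline{g}(0) = 0$, so good reduction excludes $\overline{f}(0) = 0$, whence $a_0 \equiv 1 \pmod 2$. On $\mathbb{P}^1(\mathbb{F}_2) = \{0, 1, \infty\}$ we have $\overline{\phi}(0) = \infty$ and $\overline{\phi}(\infty) = \overline{a_d}/\overline{b_d} = 1$ automatically, so minimality of $\phi_1$ reduces to $\overline{\phi}(1) = 0$, equivalent to $\overline{f}(1) = 0$ and $\overline{g}(1) \neq 0$, i.e., $A_\phi \equiv 0$ and $B_\phi \equiv 1 \pmod 2$.

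\emph{Converse direction.} Assume (\ref{conditionmod1}). For $z \in 2\mathbb{Z}_2 = B_1(0)$, every monomial of $g(z)$ is divisible by $z$ so $|g(z)|_2 \le 1/2$, while $f(z) \equiv a_0 \equiv 1 \pmod 2$ gives $|f(z)|_2 = 1$; hence $\phi(z) \in B_1(\infty)$. For $z \in B_1(\infty)$, substituting $w = 1/z \in 2\mathbb{Z}_2$ turns $\phi$ into $\varphi$, and $\tilde f(0) = \tilde g(0) = 1$ forces $\varphi(w) \in 1 + 2\mathbb{Z}_2 = B_1(1)$. For $z = 1+u \in B_1(1)$ with $u \in 2\mathbb{Z}_2$, $f(1+u) \equiv A_\phi \equiv 0$ and $g(1+u) \equiv B_\phi \equiv 1 \pmod 2$, so $\phi(z) \in B_1(0)$. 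Thus $\phi_1$ realizes the $3$-cycle $B_1(0) \to B_1(\infty) \to B_1(1) \to B_1(0)$, which is minimal. For $1$-Lipschitz continuity, apply Lemma~\ref{lem-1-Lip} three times: to $\psi$ after dividing numerator and denominator by $a_0 \in \mathbb{Z}_2^*$, yielding $\psi(z) = \mu_1 z + \mu_2 z^2 + \cdots$ with $\mu_i \in \mathbb{Z}_2$; to $\varphi$ directly (both constant terms are $1$, all coefficients in $\mathbb{Z}_2$), giving $\varphi(w) = 1 + c_1 w + c_2 w^2 + \cdots$ with $c_i \in \mathbb{Z}_2$; and to $\phi(1+u)$ after dividing by $B_\phi \in \mathbb{Z}_2^*$, giving $\phi(1+u) = \phi(1) + \nu_1 u + \nu_2 u^2 + \cdots$ with $\phi(1) \in 2\mathbb{Z}_2$ and $\nu_i \in \mathbb{Z}_2$. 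In each local coordinate $\phi$ is a power series with $\mathbb{Z}_2$-coefficients, hence $1$-Lipschitz there; since the coordinate charts are isometries for the spherical metric, this globalizes.

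\emph{The moreover part, and the main obstacle.} From $\phi^3 = \phi \circ \varphi \circ \psi$ and the three $\mathbb{Z}_2$-coefficient Taylor expansions above, composition of power series yields $\phi^3(z) = \phi(1) + \lambda z + \lambda_2 z^2 + \cdots$ with all $\lambda_i \in \mathbb{Z}_2$ (and $\lambda = \phi'(1)(a_{d-1}-b_{d-1})(b_1/a_0)$ by the chain rule). The principal technical subtlety is that the converse direction does not assume good reduction, so the classical $1$-Lipschitz theorem for good-reduction maps is unavailable; the workaround is the explicit power-series description on each ball provided by Lemma~\ref{lem-1-Lip}, which crucially relies on the denominators of $\psi$, $\varphi$, and $\phi(1+\cdot)$ all having unit constant terms ($a_0$, $1$, and $B_\phi$ respectively, each guaranteed by the hypotheses in (\ref{conditionmod1})).
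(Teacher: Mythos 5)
Your proposal is correct and follows essentially the same route as the paper: the forward direction extracts integrality and the three congruences from the normalization $a_d=b_d=1$, good reduction, and $\overline{\phi}(1)=0$; the converse uses the factorization $\phi^3=\phi\circ\varphi\circ\psi$ together with Lemma~\ref{lem-1-Lip} applied in the three local charts (after dividing by the unit constant terms $a_0$ and $B_\phi$) to get the integral Taylor expansion, $1$-Lipschitz continuity, and the $3$-cycle at level $1$. Your write-up is in fact somewhat more explicit than the paper's about the ball-by-ball isometries and why the charts' denominators have unit constant term, but the underlying argument is the same.
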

\begin{proof}
Assume that  $\phi$ has good reduction,  the coefficients $a_i$ and $b_j$ are in $\Z_2$. Otherwise, the degree of the reduction map of $\phi$ is strictly less than $d$, which implies that  $\phi$ has bad reduction.

Let $$\overline{\phi}(z)=\frac{\overline{f}(z)}{\overline{g}(z)}=\frac{\overline{a}_0+\overline{a}_1z+\cdots +\overline{a}_{d-1}z^{d-1}+z^d}{\overline{b}_1z+\cdots + \overline{b}_{d-1}z^{d-1}+ z^d}$$
be the reduction of $\phi$ modulo $p$. The condition that $\phi$ has good reduction implies that  the polynomials $\overline{f},\overline{g}$ have no common zero. 
As we have already indicated above, we can assume $\phi(0)=\infty$ and $\phi(\infty)=1$.
So we have $a_{0}\equiv 1 \ ({\rm mod}  \ 2)$.
The minimality of the system $(\mathfrak{B}_1,\phi_1)$ means that  $\overline{\phi}(1)=0$. So we have $\overline{f}(1)=0$ which implies $ A_\phi \equiv 0\ ({\rm mod} \  \ 2) $.
Since the polynomials $\overline{f},\overline{g}$ have no common zero, we have $\overline{g}(1)\neq 0$ which means that  $B_{\phi} \equiv 1 \ ({\rm mod} \   2)$.

Now we assume that  the coefficients of  $\phi$  satisfy the condition (\ref{conditionmod1}). 
By (\ref{conditionmod1}), we can check directly that $(\mathfrak{B}_1,\phi_1)$ is minimal. 

By Lemma \ref{lem-1-Lip}, the conditions 
$A_\phi \equiv 0 \ ({\rm mod}   \ 2),$ and $B_\phi \equiv 1 \ ({\rm mod}   \ 2)$ imply that $\phi(z+1)$ is $1$-Lipschitz from $p\Zp$ to $p\Zp$ and can be written as the form of (\ref{inte-coe}).
Recall that $\phi^3=\phi \circ \varphi \circ \psi$ and note that $\psi(0)=0$ and $\varphi(0)=1$. It suffices to prove that $\psi$ and $\varphi-1$ are both $1$-Lipschitz from $p\Zp$ to itself and have the form (\ref{inte-coe}). These can be obtained by direct calculations and by applying Lemma \ref{lem-1-Lip}.
\end{proof}

In the following lemma, we will calculate the derivative of $\phi^{3}$ at $0$.  Using the chain rule, we obtain 
\begin{equation}
(\phi^{3})^{\prime}(0)=\psi^\prime(0)\cdot\varphi^\prime(0)\cdot\phi^\prime(1).
\end{equation}\label{derivative}

For simplicity, we denote $A^{\prime}_{\phi}:=\sum_{i\geq1}ia_i$ and
$B^{\prime}_{\phi}:=\sum_{i\geq1}ib_i$.
By  calculation, we have 
$$\eta_1:=\psi^{\prime}(0)=b_1/a_0,$$
$$\eta_2:=\varphi^{\prime}(0)=a_{d-1}-b_{d-1},$$
and $$\eta:=\phi^{\prime}(1)=\frac{A_\phi^\prime B_\phi-B_\phi^\prime A_{\phi}}{B_{\phi}^{ 2}}.$$
\begin{lemma}\label{mod4}
Assume that the  rational map $\phi$ has good reduction. 
 Then the system  $(\mathfrak{B}_2,\phi_2
)$ is minimal if and only if
$$\left\{
   \begin{array}{ll}
     a_i,b_j \in \Z_2, \hbox{~~~~~~~for $0\leq i \leq d-1$ and $1\leq j \leq d-1$},\\
     a_{0}\equiv 1 \ ({\rm mod}   \ 2),  \\
    B_{\phi} \equiv 1 \ ({\rm mod}   \ 2), \\
    A_{\phi} \equiv 2 \ ({\rm mod}   \ 4) \\
    A_{\phi,1} \equiv 1 \ ({\rm mod}   \ 2),\\
     b_1 \equiv 1 \ ({\rm mod}   \ 2),\\
     a_{d-1}-b_{d-1} \equiv 1 \ ({\rm mod}   \ 2).\\
   \end{array}
 \right.$$
\end{lemma}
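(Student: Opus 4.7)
My plan is to reduce the minimality of $(\mathfrak{B}_2,\phi_2)$ to a single combinatorial criterion---the growth of the unique fixed cycle of $\psi_1=\phi_1^3$ at level $2$---and then translate that criterion into arithmetic conditions on the coefficients using the factorisation $\phi^3=\phi\circ\varphi\circ\psi$ together with the chain rule.

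First I would invoke Lemma \ref{mod1}: the minimality of $(\mathfrak{B}_1,\phi_1)$ is equivalent to $a_i,b_j\in\Z_2$ together with $a_0\equiv 1$, $A_\phi\equiv 0$ and $B_\phi\equiv 1\pmod 2$, and under these conditions $\phi_1$ is the $3$-cycle $(0,\infty,1)$ on $\PF$. So $\psi_1=\phi_1^3$ fixes each of those three classes and, by the cyclic symmetry induced by $\phi$, the system $(\mathfrak{B}_2,\phi_2)$ is minimal if and only if the single fixed cycle $(0)$ of $\psi_1$ grows at level $2$. The growth criterion from Section \ref{induc} reads $\alpha_1(0)=\psi'(0)\equiv 1\pmod 2$ and $\beta_1(0)=\psi(0)/2\not\equiv 0\pmod 2$, where $\psi=\phi^3$ is regarded as a $1$-Lipschitz self-map of $2\Z_2$ (the last assertion of Lemma \ref{mod1}).

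Next I would unwind the two conditions. For $\beta_1(0)$, compute $\phi^3(0)=\phi(\varphi(\psi(0)))=\phi(1)=A_\phi/B_\phi$; since $B_\phi$ is a $2$-adic unit, the requirement that $\phi^3(0)$ have $2$-adic valuation exactly $1$ is equivalent to $A_\phi\equiv 2\pmod 4$. For $\alpha_1(0)$ the chain rule gives
$$(\phi^3)'(0)=\psi'(0)\,\varphi'(0)\,\phi'(1)=\eta_1\,\eta_2\,\eta,$$
and since $\Z_2/2\Z_2=\{0,1\}$, this product is $\equiv 1\pmod 2$ iff each factor is a $2$-adic unit. From the formulas recorded before the lemma one reads $\eta_1=b_1/a_0\equiv b_1\pmod 2$ and $\eta_2=a_{d-1}-b_{d-1}$ directly; for $\eta=(A_\phi' B_\phi-B_\phi' A_\phi)/B_\phi^2$, using $A_\phi\equiv 0$ and $B_\phi\equiv 1\pmod 2$ I get $\eta\equiv A_\phi'\pmod 2$, and because the factor $i$ in $A_\phi'=\sum_{i\ge 1}ia_i$ kills the even-indexed terms mod $2$, one obtains $A_\phi'\equiv A_{\phi,1}\pmod 2$.

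Combining the level-$1$ conditions with the four new conditions $A_\phi\equiv 2\pmod 4$, $A_{\phi,1}\equiv 1\pmod 2$, $b_1\equiv 1\pmod 2$ and $a_{d-1}-b_{d-1}\equiv 1\pmod 2$ yields exactly the list in the lemma, and every implication is reversible. The only slightly delicate step is the reduction $\eta\equiv A_\phi'\equiv A_{\phi,1}\pmod 2$, which I expect to be the main (and really rather minor) technical point; the rest is routine bookkeeping with the chain rule and the explicit forms of $\psi$ and $\varphi$.
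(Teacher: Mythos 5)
Your argument is correct and follows essentially the same route as the paper: reduce to the growth of the fixed cycle $(0)$ of $(\phi^3)_1$ at level $2$, translate growth into $(\phi^3)'(0)\equiv 1\pmod 2$ and $\phi^3(0)=\phi(1)=A_\phi/B_\phi\equiv 2\pmod 4$, and evaluate the derivative via the chain rule on $\phi^3=\phi\circ\varphi\circ\psi$ with the same reductions $\eta_1\equiv b_1$, $\eta_2=a_{d-1}-b_{d-1}$, $\eta\equiv A_\phi'\equiv A_{\phi,1}\pmod 2$. The only cosmetic issue is the double use of the symbol $\psi$ (once for $1/\phi$ in the chain rule, once for $\phi^3$ in the growth criterion), which does not affect correctness.
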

\begin{proof}The rational map  $\phi$ has good reduction and $(\mathfrak{B}_2, \phi_2)$ is minimal implies that  $(\mathfrak{B}_1, \phi_1)$ is also minimal.
So $p\Zp$ is $\phi^{3}$-invariant. By the classification of the lifts of the cycles,  $(\mathfrak{B}_2,\phi_2
)$ is minimal if and only if
\begin{align} \label{mod41}
  (\mathfrak{B}_1, \phi_1) \mbox{~is  minimal}, \ (\phi^{3})^{\prime}(0)\equiv 1\ ({\rm mod}   \ 2), \ \mbox{~and~} \phi^3(0) \phi(1) \equiv 2\ ({\rm mod}   \ 4)
  \end{align}
under the condition that $\phi$ has good reduction.

 By Lemma \ref{mod1}, $(\mathfrak{B}_1, \phi_1)$ is  minimal if and only if   $a_0\equiv 1 \ ({\rm mod}   \ 2)$, $B_{\phi}\equiv 1\ ({\rm mod}   \ 2)$  and $A_{\phi} \equiv 0 \ ({\rm mod}   \ 2)$.

Recall that
\begin{align*}
 (\phi^{ 3})^{\prime}(0)&= \psi^\prime(0)\cdot\varphi^\prime(0)\cdot\phi^\prime(1)
 = \frac{b_1}{a_0}(a_{d-1}-b_{d-1})\frac{A_\phi^\prime B_\phi-B_\phi^\prime A_{\phi}}{B_{\phi}^{ 2}}.
 \end{align*}
  The second condition $(\phi^{3})^{\prime}(0)\equiv 1\ ({\rm mod}   \ 2)$ of (\ref{mod41}) 
 implies that
 $$b_1\equiv 1 \ ({\rm mod}   \ 2),$$
$$(a_{d-1}-b_{d-1})\equiv 1 \ ({\rm mod}   \ 2),$$ and
$$A^{\prime}_{\phi} \equiv 1 \ ({\rm mod}   \ 2).$$
Since
\begin{align*}
A^{\prime}_\phi &= \sum_{i\geq 0}(2i+1)a_{2i+1}+\sum_{i\geq1}2ia_{2i}
\equiv   \sum_{i\geq 0} a_{2i+1}\ ({\rm mod}   \ 2),
\end{align*}
we have  $A_{\phi,1}  \equiv 1 \ ({\rm mod}   \ 2)$.

By the last condition  $\phi(1)\equiv 2 \ ({\rm mod}   \ 4)$ in  (\ref{mod41}), we immediately get $$A_{\phi} \equiv 2 \ ({\rm mod}   \ 4).$$
\end{proof}
For  the proof of Theorem \ref{p=2}, we need calculate the second derivative of $\phi^{3}$ at $0$. For simplicity, denote $A^{\prime\prime}_{\phi}:=\sum_{i\geq 2}i(i-1)a_i$ and
$B^{\prime\prime}_{\phi}:=\sum_{i\geq2}i(i-1)b_i$.
Before the proof let us first calculate the second derivatives $\psi^{\prime\prime}(0)$, $\varphi^{\prime\prime}(0)$ and $\phi^{\prime\prime}(1)$:
$$\xi_1:=\psi^{\prime\prime}(0)=\frac{2b_2a_0^2-2a_1b_1a_0}{a_0^3},$$
$$\xi_2:=\varphi^{\prime\prime}(0)=2(a_{d-2}-b_{d-2})+2(b_{d-1}^2-a_{d-1}b_{d-1}),$$
$$\xi:=\phi^{\prime\prime}(1)=\frac{A^{\prime\prime}_{\phi}B_{\phi}^{2}-B^{\prime\prime}_{\phi}  A_\phi B_\phi + 2(A_\phi (B_\phi^{\prime})^2-A_\phi^{\prime}B_\phi^{\prime}B_\phi)}{B_{\phi}^{3}}.$$
Now we are ready to prove  Theorem \ref{p=2}.
\begin{proof}[Proof of Theorem \ref{p=2}]
Under the condition that $\phi$ has good reduction, Theorem \ref{minifinite} implies that $(\P,\phi)$ is minimal if and only if
$(\mathfrak{B}_3,\phi_3)$ is minimal which is equivalent to the following three conditions 
\begin{align}\label{level3-cond}
(\mathfrak{B}_2,\phi_2) \mbox{ is minimal}, \ (\phi^{ 6})^\prime(0)\equiv 1 \ ({\rm mod} \   2), \ \mbox{ and }\ \phi^{ 6}(0) \equiv 4 \ ({\rm mod} \   8).
\end{align}

In the following, we will characterize the three conditions of (\ref{level3-cond}).

Firstly, the minimality of the system $(\mathfrak{B}_2,\phi_2) $ has already  been  characterized   by coefficients in Lemma \ref{mod4}. Then we obtain all the conditions in (\ref{equ1}) except the last one. 

Secondly, if the first condition of (\ref{level3-cond}) is satisfied then by (\ref{mod41}), $$(\phi^{ 6})^\prime(0)\equiv ((\phi^{3})^\prime(0))^2\equiv 1 \ ({\rm mod} \   2).$$ Thus the second condition of (\ref{level3-cond}) is in fact included in the first condition of (\ref{level3-cond}).

 Finally, we are going to investigate  the relation among the coefficients by using the last condition $\phi^{ 6}(0) \equiv 4 \ ({\rm mod} \   8)$ of (\ref{level3-cond}) which will lead to the last condition of (\ref{equ1}).
 
 Note that all  the Taylor's coefficients of $\phi^3$ expanded at $z=0$ belong to $\Z_2$. Hence   for $z\in 2\Z_2$, we have 
$$\phi^{3}(z)\equiv \phi(1)+(\phi^{3})^{\prime}(0) z+\frac{(\phi^{3})^{\prime\prime}(0)}{2}z^2 \ ({\rm mod}   \ 8).$$
 Thus
\begin{align}\label{dividing}
\phi^{ 6}(0)\equiv \phi(1)+(\phi^{3})^{\prime}(0) \phi(1)+\frac{(\phi^{3})^{\prime\prime}(0)}{2}\phi(1)^2 \ ({\rm mod}   \ 8).
\end{align}
Notice that  $\phi(1) \equiv 2 \ ({\rm mod} \   4) $ and $(\phi^{3})^{\prime}(0)\equiv 1 \ ({\rm mod} \  2)$.
Then dividing both sides of (\ref{dividing}) by $\phi(1)$, we deduce that the condition $\phi^{ 6}(0) \equiv 4 \ ({\rm mod} \  \ 8)$ is  equivalent to 
$$1+(\phi^{3})^{\prime}(0) +\frac{(\phi^{3})^{\prime\prime}(0)}{2}\phi(1)\equiv 2 \ ({\rm mod}   \ 4),$$
which  in turn, by noting that $(\phi^{3})^{\prime \prime}(0)/2 \in \Z_2$,
is  equivalent to
\begin{align}\label{equiv-conditions}
(\phi^{3})^{\prime}(0)+(\phi^{3})^{\prime\prime}(0)\equiv 1 \ ({\rm mod}   \ 4).
\end{align}

To continue, let us calculate $(\phi^{3})^{\prime}(0)\ ({\rm mod} \   4), (\phi^{3})^{\prime\prime}(0) \ ({\rm mod} \   4)$  and simplify their expressions.
Using $a_0\equiv 1 \ ({\rm mod} \   2)$ and $B_\phi\equiv 1\ ({\rm mod} \   2)$, we have $1/a_0\equiv a_0\ ({\rm mod} \   4)$ and $B_{\phi}^{2}\equiv 1 \ ({\rm mod} \   4)$. Then $$ (\phi^{3})^{\prime}(0)\equiv  a_0b_1(a_{d-1}-b_{d-1})(A_{\phi}^{\prime}B_\phi-A_\phi B_{\phi}^{\prime}) \ ({\rm mod}   \ 4).$$
Since $A_\phi\equiv2 \ ({\rm mod} \   4)$ and $ \alpha_1\equiv \alpha_2\equiv 1 \ ({\rm mod} \  2)$,  we have
\begin{align*}
(\phi^{3})^{\prime}(0)&\equiv  a_0b_1(b_{d-1}-b_{d-1})A_{\phi}^{\prime}B_\phi-A_\phi B_{\phi}^{\prime} \ ({\rm mod}   \ 4).
\end{align*}

For the  second derivative of $\phi^{3}$ at $0$, by the chain rule,  we have 
\begin{align}\label{5.3}
(\phi^{3})^{\prime\prime}(0)
= \eta\eta_2\xi_1+\eta\eta_1^2\xi_2+\xi\eta_1^2\eta_2^2. 
\end{align}
Since $\eta\equiv \eta_1\equiv \eta_2\equiv 1 \ ({\rm mod} \   2)$, we have $\eta^2\equiv \eta_1^2\equiv \eta_2^2\equiv 1 \ ({\rm mod} \   4)$.
Consequently,  we obtain
\begin{align}\label{5.4}
(\phi^{3})^{\prime\prime}(0)&\equiv  \eta\eta_2\xi_1+\eta\xi_2+\xi \ ({\rm mod}   \ 4).
\end{align}
Apply  the expressions of $\xi,\xi_1$ and $\xi_2$ to Equation (\ref{5.4}). Using the facts  that  $a_0\equiv b_1 \equiv a_{d-1}-b_{d-1}\equiv 1 \ ({\rm mod} \   2)$ and $B_{\phi}\equiv 1 \ ({\rm mod} \   2)$,  we deduce
\begin{align*}
(\phi^{3})^{\prime\prime}(0)&\equiv  2\eta\eta_2(a_0b_2-a_1b_1)+2\eta(a_{d-2}-b_{d-2}+b_{d-1}^{2}-a_{d-1}b_{d-1})+\\
&\ \ \ \ \ \ \ \ \ \ \ \ \ \ \ +A_{\phi}^{\prime\prime}B_{\phi}-2A_{\phi}^{\prime}B_{\phi}^{\prime}-A_{\phi}B_{\phi}^{\prime\prime} \ ({\rm mod}   \ 4)\\
&\equiv  2(b_2-a_1+a_{d-2}-b_{d-2}+b_{d-1})+A_{\phi}^{\prime\prime}B_{\phi}-2A_{\phi}^{\prime}B_{\phi}^{\prime} -A_{\phi}B_{\phi}^{\prime\prime} \ ({\rm mod}   \ 4).
\end{align*}
Observing  that   $B_{\phi}^{\prime\prime}\equiv 0\ ({\rm mod} \   2)$ and $A_{\phi}\equiv 0 \ ({\rm mod} \   2)$,  we have 
\begin{align*}
 &  (\phi^{3})^{\prime}(0)+(\phi^{3})^{\prime\prime}(0)\\
\equiv &\    a_0b_1(a_{d-1}-b_{d-1})A_{\phi}^{\prime}B_\phi-A_\phi B_{\phi}^{\prime}+2(b_2-a_1+a_{d-2}-b_{d-2}+b_{d-1})\\
&  \ \ \ \ \ \ \ \ \ \ \ \ \ \ \ +A_{\phi}^{\prime\prime}B_{\phi}-2A_{\phi}^{\prime}B_{\phi}^{\prime} \ ({\rm mod}   \ 4).
\end{align*}
Since $A_{\phi}\equiv 2 \ ({\rm mod} \   4) $ and  $A_{\phi}^{\prime}\equiv  A_{\phi,1} \equiv 1 \ ({\rm mod}  \ 2)$, we get
\begin{align*}
 &  (\phi^{3})^{\prime}(0)+(\phi^{3})^{\prime\prime}(0)\\
\equiv &\  a_0b_1(a_{d-1}-b_{d-1})A_{\phi}^{\prime}B_\phi+2(b_2-a_1+a_{d-2}-b_{d-2}+b_{d-1})+A_{\phi}^{\prime\prime}B_{\phi} \ ({\rm mod}   \ 4).
\end{align*}
Note that \begin{align*}
A_{\phi}^{\prime} \equiv \sum_{i\geq 0} a_{4i+1}+ 2\sum_{i\geq 0}a_{4i+2}-\sum_{i\geq 0}a_{4i+3} \ ({\rm mod}   \ 4)
\end{align*}
and
\begin{align*}
A_{\phi}^{\prime\prime}\equiv &2\sum_{i\geq 0}a_{4i+2}+2\sum_{i\geq 0}a_{4i+3} \ ({\rm mod}   \ 4).
\end{align*}
Then
\begin{align*}
 &  (\phi^{3})^{\prime}(0)+(\phi^{3})^{\prime\prime}(0)\\
\equiv&  a_0b_1(a_{d-1}-b_{d-1})(A_{\phi,2}-A_{\phi,3})B_\phi+2(b_2-a_1+a_{d-2}-b_{d-2}+b_{d-1})\\
&\ \ \ \ \ \ \ \ \ \ \ \ \ \ \  +2\sum_{i\geq 0}a_{4i+2}+2(\sum_{i\geq 0}a_{4i+2}+A_{\phi,3}) \ ({\rm mod}   \ 4)\\
\equiv& a_0b_1(a_{d-1}-b_{d-1})(A_{\phi,2}-A_{\phi,3})B_\phi \\
&\ \ \ \ \ \ \ \ \ \ \ \ \ \ \  +2(b_2-a_1+a_{d-2}-b_{d-2}+b_{d-1}+A_{\phi,3})  \ ({\rm mod}   \ 4).
\end{align*}
By (\ref{equiv-conditions}), we obtain the last condition of (\ref{equ1}). 
Hence, we obtain the first part of Theorem \ref{p=2}.

For the other part of Theorem \ref{p=2}, note that the first four conditions of (\ref{equ1}) imply the $1$-Lipschitz continuity of $\phi$. Further, $\phi^3$ can be written as a form of power series with coefficients in $\Zp$. Thus from the proof of Theorem \ref{minimal}, we deduce that the minimality of $\phi$ is equivalent to the conditions (\ref{level3-cond}). Hence the above proof of the first part leads to the conclusion of the second part of the theorem.
\end{proof}

\begin{proof}[Proof of Corollary \ref{non-exist}]
Remark that the good reduction property and minimality of $\phi$ imply that $\phi$ is isometric on $\mathbb{P}^{1}(\Q_2)$.    

For a  rational map $\phi$ of degree 2 which has  good reduction   and is  minimal on $\mathbb{P}^1(\Q_2)$, each  point   has either  $0$ or $2$ pre-images since $\phi$ can not have critical point  in $\mathbb{P}^{1}(\Q_2)$. This contradicts  to the fact 
that $\phi$ is isometric. 

For a rational map $\phi$ of degree $3$  which is defined as (\ref{phi-def}), the good reduction property and minimality give  
the reduction $\overline{\phi}(z)=\frac{(z-1)f(z)}{z g(z)}$, where $f,g\in \mathbb{F}_{2}[z]$ are two quadratic irreducible polynomials.  However,  $1+z+z^2$ is the unique quadratic irreducible  polynomial over $\mathbb{F}_2$.  Hence, $f=g$, which contradicts that 
$\phi$ has good reduction.

For a rational map $\phi$ of degree $4$  which is defined as (\ref{phi-def}),  the good reduction property and minimality give  
the reduction $\overline{\phi}(z)=\frac{(z-1)f(z)}{z g(z)}$, with $f,g\in \mathbb{F}_{2}[z]$ being two different cubic  irreducible polynomials.  It is known that $1+z+z^3$ and $1+z^2+z^3$ are the only two cubic  irreducible polynomials  over $\mathbb{F}_{2}$.
Hence, $\overline{\phi}(z)$ can only have two cases: 
\[
\overline{\phi}(z)=\frac{(z-1)(1+z+z^3)}{z (1+z^2+z^3)} \quad \text{or} \quad \overline{\phi}(z)=\frac{(z-1)(1+z^2+z^3)}{z (1+z+z^3)}.
\]
 By simple calculations, in both cases, $a_3-b_3\equiv 0  \ ({\rm mod}   \ 2)$, which contradicts the seventh equation in (\ref{equ1}).
 \end{proof}

\subsection{Minimal rational maps of degree $4$}\label{deg4}

This  section is devoted to investigate the minimal rational maps of degree $4$ which are $1$-Lipschitz continuous and their induced orbits on $\mathfrak{B}_3$.

 For a rational map $$\phi(z)=\frac{a_0+a_1z+\cdots +a_{d-1}z^{d-1}+a_d z^{d}}{b_1z+\cdots + b_{d-1}z^{d-1}+ b_{d} z^{d}}$$ of degree $d$ with $a_i,b_j \in \Q_2$ and $a_{d}=b_{d}=1$. Assume that $\phi$   $1$-Lipschitz  continuous and minimal on $\mathbb{P}^{1}(\Q_2)$. The orbit of $\phi_1$ on $\mathfrak{B}_1$ is $0\to \infty \to 1 \to 0$.
For the convenience of investigating the induced orbit on $\mathfrak{B}_n$ with $n\geq 2$, we choose the local coordinate  around $\infty$ by $f(z)=1/z$.  Denote by $\widetilde{B}_{1}(0)$ the image of $B_{1}(\infty)$ under $f$. we have the following communicating graph. 
 \begin{center}
 \setlength{\unitlength}{1mm}
 \begin{picture}(60,40)

 \put(7,28){$\vector(1,-1){20}$}

 \put(28,4){$\widetilde{B}_{1}(0)$}
 \put(2,30){$B_1(0)$}
 \put(15,14){$\psi$}
\put(31,33){$\phi$}
 \put(10,28){$\vector(3,-1){17}$}
 \put(17,27){$\phi$}
 \put(28,20){$B_1(\infty)$}
 \put(32,19){$\vector(0,-1){10}$}
\put(38,23){$\vector(3,1){17}$}
\put(45,27){$\phi$}
  \put(33,13){$f$}
\put(55,30){$B_{1}(1)$}
\put(37,8){$\vector(1,1){20}$}
 \put(45,14){$\varphi$}
\put(54,31){$\vector(-1,0){43}$}
 \end{picture}
 \end{center}
Actually, the two balls  $B_{1}(0)$ and $\widetilde{B}_{1}(0)$ have no difference. To avoid confusion, the elements in $\widetilde{B}_{1}(0)$ are denoted by $\widetilde{z}$,  such as  $\widetilde{1}$, $\widetilde{2}$.

Let $\widetilde{\mathbb{P}^1}(\Q_2)$ be the disjoint union of $B_{1}(0), \widetilde{B}_{1}(0)$ and $B_{1}(1)$, i.e.
$$\widetilde{\mathbb{P}^1}(\Q_2)=B_{1}(0)\sqcup \widetilde{B}_{1}(0) \sqcup B_{1}(1).$$
Then we can define a map $\widetilde{\phi}$ from  $\widetilde{\mathbb{P}^1}(\Q_2)$ to itself as follows:
$$\widetilde{\phi}(z):=\left\{
                    \begin{array}{ll}
                      \widetilde{\psi(z)}, & \mbox{if $z\in B_1(0)$;} \\
                      \varphi(z), & \mbox{if $z\in \widetilde{B}_{1}(0)$;}\\
                      \phi(z),  & \mbox{if $z\in B_{1}(1)$.}\\
                    \end{array}
                  \right.
 $$

Instead of investigating the dynamical system $(\mathbb{P}^{1}(\Q_2),\phi)$, we study the dynamical system $(\widetilde{\mathbb{P}}^{1}(\Q_2),\widetilde{\phi})$, so that we can do modulo calculation by computer. We  obtain    all     rational  maps of degree 4  with coefficients  in $\{0,1,2,3\}$ which  have good reduction and are minimal on $\mathbb{P}^{1}(\Q_p)$. 
We associate each of the $12$ vertices at level $3$ a label
$$\begin{cases}
    i,  \hbox{\ \ \ \ \ \ where $i \in \{0, 1, 2\cdots 7\}$,  if the vertex corresponds to the ball $B_{n}(i)$},\\
    \widetilde{i}, \hbox{\ \ \ \ \ \ where  $i \in \{0,  2, 4, 6\}$,  if the vertex corresponds to the ball $B_{n}(1/i)$}.\\
   \end{cases}$$
 The rational maps and their induced orbits at level $3$ are showed in the following table. 
\\

\begin{tabular}{|c|l|}
  \hline
   Coefficients $a_0, a_1, a_2, a_3, b_1, b_2, b_3$ & Induced  periodic orbits at  level 3\\
\hline
$1,\ \ 0,\ \ 1,\ \ 3,\ \ 3,\ \ 1,\ \ 0$ & \\
 $1,\ \ 1,\ \ 1,\ \ 2,\ \ 3,\ \ 2,\ \ 3$ &\\
 $1,\ \ 2,\ \ 1,\ \ 1,\ \ 3,\ \ 3,\ \ 2$&\\
$1,\ \ 2,\ \ 3,\ \ 3,\ \ 3,\ \ 3,\ \ 0$ & $0\to \widetilde{0}\to 1\to 6 \to \widetilde{6}\to 3 $\\
$1,\ \ 3,\ \ 1,\ \ 0,\ \ 3,\ \ 0,\ \ 1$ & $\to 4\to \widetilde{4}\to 5\to 2\to \widetilde{2}\to 7$   \\
$ 1,\ \ 3,\ \ 3,\ \ 2,\ \ 3,\ \ 0,\ \ 3$ &\\
$3,\ \ 2,\ \ 1,\ \ 3,\ \ 1,\ \ 1,\ \ 0$ &\\
$3,\ \ 3,\ \ 1,\ \ 2,\ \ 1,\ \ 2,\ \ 3$ &\\
\hline
$1,\ \ 0,\ \ 1,\ \ 3,\ \ 1,\ \ 1,\ \ 2$ & \\
 $1,\ \ 1,\ \ 1,\ \ 2,\ \ 1,\ \ 2,\ \ 1$ &\\
 $1,\ \ 2,\ \ 1,\ \ 1,\ \ 1,\ \ 3,\ \ 0$&\\
$1,\ \ 2,\ \ 3,\ \ 3,\ \ 1,\ \ 3,\ \ 2$ & $0\to\widetilde{0}\to1\to6\to\widetilde{2}\to3$\\
$1,\ \ 3,\ \ 1,\ \ 0,\ \ 1,\ \ 0,\ \ 3$ &$\to4\to\widetilde{4}\to5\to2\to\widetilde{6}\to7$\\
$1,\ \ 3,\ \ 3,\ \ 2,\ \ 1,\ \ 0,\ \ 1$ &\\
$3,\ \ 2,\ \ 1,\ \ 3,\ \ 3,\ \ 1,\ \ 2$&\\
$3,\ \ 3,\ \ 1,\ \ 2,\ \ 3,\ \ 2,\ \ 1$ &\\
\hline
$1,\ \ 0,\ \ 1,\ \ 3,\ \ 1,\ \ 1,\ \ 0$ & \\
$1,\ \ 1,\ \ 1,\ \ 2,\ \ 1,\ \ 2,\ \ 3$ &\\
$1,\ \ 2,\ \ 1,\ \ 1,\ \ 1,\ \ 3,\ \ 2$&\\
$1,\ \ 2,\ \ 3,\ \ 3,\ \ 1,\ \ 3,\ \ 0$ & $0\to \widetilde{0}\to1\to2\to\widetilde{6}\to3$\\
$1,\ \ 3,\ \ 1,\ \ 0,\ \ 1,\ \ 0,\ \ 1$ & $\to4\to\widetilde{4}\to5\to6\to\widetilde{2}\to7$\\
$1,\ \ 3,\ \ 3,\ \ 2,\ \ 1,\ \ 0,\ \ 3$ &\\
$3,\ \ 2,\ \ 1,\ \ 3,\ \ 3,\ \ 1,\ \ 0$&\\
$3,\ \ 3,\ \ 1,\ \ 2,\ \ 3,\ \ 2,\ \ 3$ &\\
\hline

\end{tabular}

\begin{tabular}{|c|l|}
  \hline
   Coefficients $a_0, a_1, a_2, a_3, b_1, b_2, b_3$ & Induced  periodic orbits at  level 3\\

\hline
$1,\ \ 0,\ \ 1,\ \ 3,\ \ 3,\ \ 1,\ \ 2$ & \\
$1,\ \ 1,\ \ 1,\ \ 2,\ \ 3,\ \ 2,\ \ 1$ &\\
$1,\ \ 2,\ \ 1,\ \ 1,\ \ 3,\ \ 3,\ \ 0$ &\\
$1,\ \ 2,\ \ 3,\ \ 3,\ \ 3,\ \ 3,\ \ 2$ & $0\to\widetilde{0}\to1\to2\to\widetilde{2}\to3$\\
$1,\ \ 3,\ \ 1,\ \ 0,\ \ 3,\ \ 0,\ \ 3$ &$\to4\to\widetilde{4}\to5\to6\to\widetilde{6}\to7$\\
$1,\ \ 3,\ \ 3,\ \ 2,\ \ 3,\ \ 0,\ \ 1$ &\\
$3,\ \ 2,\ \ 1,\ \ 3,\ \ 1,\ \ 1,\ \ 2$&\\
$3,\ \ 3,\ \ 1,\ \ 2,\ \ 1,\ \ 2,\ \ 1$ &\\
\hline
$1,\ \ 0,\ \ 3,\ \ 1,\ \ 3,\ \ 1,\ \ 0$ & \\
$1,\ \ 1,\ \ 3,\ \ 0,\ \ 3,\ \ 2,\ \ 3$ &\\
$3,\ \ 0,\ \ 1,\ \ 1,\ \ 1,\ \ 3,\ \ 0$ &\\
$3,\ \ 0,\ \ 3,\ \ 3,\ \ 1,\ \ 3,\ \ 2$ & $0\to \widetilde{0}\to1\to 6\to \widetilde{6}\to 7$\\
$3,\ \ 1,\ \ 1,\ \ 0,\ \ 1,\ \ 0,\ \ 3$ &$\to 4\to \widetilde{4}\to 5\to 2\to \widetilde{2}\to 3$\\
$3,\ \ 1,\ \ 3,\ \ 2,\ \ 1,\ \ 0,\ \ 1$ &\\
$3,\ \ 2,\ \ 3,\ \ 1,\ \ 1,\ \ 1,\ \ 0$&\\
$3,\ \ 3,\ \ 3,\ \ 0,\ \ 1,\ \ 2,\ \ 3$ &\\
\hline

$1,\ \ 0,\ \ 3,\ \ 1,\ \ 1,\ \ 1,\ \ 2$ & \\
$1,\ \ 1,\ \ 3,\ \ 0,\ \ 1,\ \ 2,\ \ 1$ &\\
$3,\ \ 0,\ \ 1,\ \ 1,\ \ 3,\ \ 3,\ \ 2$ &\\
$3,\ \ 0,\ \ 3,\ \ 3,\ \ 3,\ \ 3,\ \ 0$ & $0\to\widetilde{0}\to1\to6\to\widetilde{2}\to7$\\
$3,\ \ 1,\ \ 1,\ \ 0,\ \ 3,\ \ 0,\ \ 1$ &$\to4\to\widetilde{4}\to5\to2\to\widetilde{6}\to3$\\
$3,\ \ 1,\ \ 3,\ \ 2,\ \ 3,\ \ 0,\ \ 3$ &\\
$3,\ \ 2,\ \ 3,\ \ 1,\ \ 3,\ \ 1,\ \ 2$&\\
$3,\ \ 3,\ \ 3,\ \ 0,\ \ 3,\ \ 2,\ \ 1$ &\\
\hline
$1,\ \ 0,\ \ 3,\ \ 1,\ \ 1,\ \ 1,\ \ 0$ & \\
$1,\ \ 1,\ \ 3,\ \ 0,\ \ 1,\ \ 2,\ \ 3$ &\\
$3,\ \ 0,\ \ 1,\ \ 1,\ \ 3,\ \ 3,\ \ 0$ &\\
$3,\ \ 0,\ \ 3,\ \ 3,\ \ 3,\ \ 3,\ \ 2$ & $0\to\widetilde{0}\to1\to2\to\widetilde{6}\to7$\\
$3,\ \ 1,\ \ 1,\ \ 0,\ \ 3,\ \ 0,\ \ 3$ &$\to4\to\widetilde{4}\to5\to6\to\widetilde{2}\to3$\\
$3,\ \ 1,\ \ 3,\ \ 2,\ \ 3,\ \ 0,\ \ 1$ &\\
$3,\ \ 2,\ \ 3,\ \ 1,\ \ 3,\ \ 1,\ \ 0$&\\
$3,\ \ 3,\ \ 3,\ \ 0,\ \ 3,\ \ 2,\ \ 3$&\\
\hline
$1,\ \ 0,\ \ 3,\ \ 1,\ \ 3,\ \ 1,\ \ 2$ & \\
$1,\ \ 1,\ \ 3,\ \ 0,\ \ 3,\ \ 2,\ \ 1$ &\\
$3,\ \ 0,\ \ 1,\ \ 1,\ \ 1,\ \ 3,\ \ 2$ &\\
$3,\ \ 0,\ \ 3,\ \ 3,\ \ 1,\ \ 3,\ \ 0$ & $0\to\widetilde{0}\to1\to2\to\widetilde{2}\to7$\\
$3,\ \ 1,\ \ 1,\ \ 0,\ \ 1,\ \ 0,\ \ 1$ &$\to4\to\widetilde{4}\to5\to6\to\widetilde{6}\to3$\\
$3,\ \ 1,\ \ 3,\ \ 2,\ \ 1,\ \ 0,\ \ 3$ &\\
$3,\ \ 2,\ \ 3,\ \ 1,\ \ 1,\ \ 1,\ \ 2$&\\
$3,\ \ 3,\ \ 3,\ \ 0,\ \ 1,\ \ 2,\ \ 1$&\\
\hline
\end{tabular}
\\
\\
%
%
%
%

\medskip

\section{Some examples}\label{exam}

\begin{example}
Let $p=3$ and $\phi(x)=-\frac{2z^2+2z+1}{z^3-3z^2+z+1}$. The  dynamical system $(\P,\phi)$ is minimal.
\end{example}
The reduction $\!\!\!\mod 3$ of $\phi$ is
$$\overline{\phi}(z)=\frac{z^2+z+2}{z^3+z+1}.$$ So $\phi$ has good reduction. Consider the map $\overline{\phi}: \mathbb{P}^{1}(\mathbb{F}_3) \to \mathbb{P}^{1}(\mathbb{F}_3).$ By a simple calculation, we get 
$\overline{\phi}(1)= \infty, \ \overline{\phi}(\infty)=0, \ \overline{\phi}(0)=2, \ \overline{\phi}(2)=1$.
Thus $\overline{\phi}$ is transitive on $\mathbb{P}^{1}(\mathbb{F}_3)$. 
Further, using the software Mathmatica, we can check
$(\phi^{ 4})^{\prime}(1))\equiv (\phi^{ 4})^{\prime}(0)\equiv 1\ (\!\!\!\! \mod 3)$. Let $\sigma=(1,\infty,0,-1)$ be the cycle at level $1$.  Since $\phi^{ 4}(1)\equiv 7 \ (\!\!\!\!\mod 3^{3})$,  $\sigma$ grows. Let $\hat{\sigma}$ be the lift of
$\sigma$ at level $2$. By calculating $\phi^{12}(1)\equiv 10\  (\!\!\!\!\mod 3^{3})$, we deduce that $\hat{\sigma}$ grows which then implies that $\hat\sigma$ grows forever. Therefore, the system $(\P,\phi)$ is minimal.
\begin{example}
Let $p=3$ and $\phi(z)=\frac{2z+3}{(z-1)(z-2)}$.  The dynamical system $(\P,\phi)$ is not minimal and we decompose $\P$ as
  $$\P= B_1(0)\bigsqcup \left(\P\setminus B_1(0) \right),$$
  where $B_1(0)$ is a minimal component of $\phi$ and the points in $\P\setminus B_1(0)$ are attracted into
  $B_1(0)$.
\end{example}
One can check  that
$\overline{\phi}(z)=\frac{2z}{z^2+2}$. Then $\deg \overline{\phi}=\deg \phi$, which implies that $\phi$ has good reduction.
First, we consider the map $\overline{\phi}: \mathbb{P}^{1}(\mathbb{F}_3) \to \mathbb{P}^{1}(\mathbb{F}_3)$.
Since $\overline{\phi}(1)=\infty, \overline{\phi}(2)=\infty , \overline{\phi}(\infty)=0$ and $ \overline{\phi}(0)=0$, it follows that
 $$\phi(B_1(1)))\subset B_1(\infty),
\phi(B_1(2))\subset B_1(\infty),$$
$$\phi(B_1(\infty))\subset B_1(0) \mbox{~~and~~} \phi( B_1(0))\subset B_1(0).$$
So $\P\setminus B_1(0)$ lies in the attracting basin of $B_1(0)$.
 It suffices to study  the subsystem $(B_1(0), \phi)$.  We shall show that the system  $(B_1(0), \phi)$  is minimal.
In fact, the  derivative of $\phi$ at $0$ is $13/4$. Thus $\phi^{\prime}(0)\equiv 1 \ (\!\!\!\!\mod 3)$. Let $\sigma=(0)$ be the cycle at level $1$.
 Then $\sigma$ grows or splits. A simple  calculation shows  that
 $$\phi(0)\equiv 15 \ (\rm{mod} \ \  3^{3}), \ \phi(15)\equiv 3 \ (\rm{mod} \ \ 3^{3}),\ \phi(3)\equiv 18 \ (\rm{mod} \ \  3^{3}).$$
 So $\sigma$ grows, by Proposition \ref{minimalphi}, the lift of $\sigma$ grows too. It then follows that $\sigma$ grows forever.
 Hence, the system $(B_1(0), \phi)$ is minimal.


\bibliographystyle{plain}

\end{document}